\documentclass[11pt,reqno]{amsart}
\usepackage{amsthm,amssymb,amsmath,euscript}
\usepackage[pagewise]{lineno}
\usepackage{enumitem}
\usepackage{hyperref}
\usepackage{fix-cm}
\usepackage{graphicx}

\usepackage[T5,T1]{fontenc}
\usepackage{mlmodern}

\newtheorem{theorem}{Theorem}
\newtheorem*{theorem*}{Theorem}
\newtheorem{lemma}{Lemma}
\newtheorem{corollary}{Corollary}
\newtheorem{proposition}{Proposition}

\theoremstyle{definition}

\newtheorem{definition}{\bf Definition}
\newtheorem*{definition*}{\bf Definition}

\newtheorem{remark}{\bf Remark}
\newtheorem*{remark*}{\bf Remark}
\newtheorem*{remarks}{\bf Remarks}

\newtheorem*{example*}{\bf Example}

\newcommand{\loc}{{\rm loc}}

\newcommand{\Real}{{\rm Re}}

\newcommand{\sprt}{{\rm sprt\,}}

\numberwithin{equation}{section}

\begin{document}

\fontsize{10.5pt}{4.5mm}\selectfont

\title{Intrinsic Hardy inequality for fractional Kolmogorov operator}

\author{Damir Kinzebulatov}

\address{Universit\'{e} Laval, D\'{e}partement de math\'{e}matiques et de statistique, Qu\'{e}bec, QC, Canada}

\email{damir.kinzebulatov@mat.ulaval.ca}

\thanks{The research of the author is supported by the NSERC (grant RGPIN-2024-04236)}

\keywords{Hardy inequality, singular drift, fractional Laplacian, diffusion equation}

\begin{abstract}
We obtain Hardy inequality for non-local diffusion operator with singular drift, in the case when the strength of attraction to the origin by the drift takes the critical value.
\end{abstract}

\subjclass[2020]{46E35 (primary), 60H10 (secondary)}

\maketitle

\section{Introduction}

The subject of this paper is the non-local diffusion operator with critical attracting drift,
\begin{equation}
\label{kolm}
\tag{$\star$}
\Lambda = (-\Delta)^{\frac{\alpha}{2}} +   b \cdot \nabla, \quad 1<\alpha < 2,
\end{equation}
that we consider on the $d$-dimensional torus $\Pi^d$, $d \geq 2$, obtained from the cube $Q=(-2,2]^d \subset \mathbb R^d$ by appropriately identifying its faces.
The drift $b:\Pi^d \rightarrow \mathbb R^d$ satisfies
\begin{equation}
\label{b_def}
\tag{$\star\star$}
b(x)|_{ x \in B_1(0) }= \frac{\nu_\star x}{|x|^{\alpha}} 
\end{equation}
and is smooth everywhere else.
Invoking the probabilistic interpretation of $\Lambda$, one can view the coupling constant $\nu_\star>0$ as measuring the strength with which the drift pushes the isotropic $\alpha$-stable process to the origin (this can be made precise, see (\textit{ii}) in ``Further discussion''). 
This coupling constant is taken to be
\begin{equation}
\label{nu_star}
\tag{$\star\star\star$}
\nu_\star:=2^{\alpha-1}\frac{\Gamma(\frac{\alpha}{2})\Gamma(\frac{d}{2})}{\Gamma(\frac{d-\alpha}{2})}.
\end{equation}
This is its critical value (we justify this below). It cannot be attained using the standard means such as the Stroock-Varopoulos inequality. The latter is a deep observation due to \cite{BJLP} put here in the context of \cite{KSS} where the authors demonstrated that there exists a theory of operator $\Lambda$ for $b(x)=\nu\frac{x}{|x|^\alpha}$ in the sub-critical regime $\nu<\nu_\star$, including two-sided heat kernel bounds and some regularity results for the parabolic equation in $L^p$ with $p$ that needs to be chosen depending on the proximity of $\nu$ to $\nu_\star$. It is however not known what theory of $\Lambda$ exists in the critical regime $\nu=\nu_\star$. The goal of the present paper is to clarify this. Our main instrument and main result  will be appropriate fractional Hardy inequality (Theorems \ref{thm1} and \ref{thm2}).

The parabolic equation $(\partial_t+\Lambda)v=0$ can be viewed as a model (``two-particle'') case of the following interacting particle system in the $dN$-dimensional space:
\begin{equation}
\label{part_syst}
(\partial_t+\sum_{i=1}^N (-\Delta)^{\frac{\alpha}{2}}_{x_i} + \frac{\nu}{N}\sum_{i=1}^N \sum_{j=1, j \neq i}^N \frac{x_i-x_j}{|x_i-x_j|^\alpha} \cdot \nabla_{x_i})v=0. 
\end{equation}
When $\alpha=2$ and $d=2$, this is the finite particle approximation of the celebrated Keller-Segel model of chemotaxis which arises as the mean-field limit of the adjoint of \eqref{part_syst} as $N \rightarrow \infty$. Both for the Keller-Segel model and the finite particle system \eqref{part_syst}, understanding what happens at the critical values of the attraction strength $\nu$ has been the subject of much research, see \cite{C,FJ,FT} and references therein. It has been demonstrated  that when $\nu$ crosses the critical threshold, the particles start to collide and the corresponding stochastic differential equation  ceases to have a weak solution; there is a blow-up.

In what follows, we are mostly concerned with the fractional case $1<\alpha<2$. (The case $\alpha=2$ is included as well, provided that $d \geq 3$, but is somewhat trivial in the context of the present paper, see remark after Theorem \ref{thm1}.)

Set
$$
\kappa_\beta:=\frac{2^\alpha \Gamma(\frac{\beta+\alpha}{2})\Gamma(\frac{d-\beta}{2})}{\Gamma(\frac{\beta}{2})\Gamma(\frac{d-\beta-\alpha}{2})}.
$$
Let us take a short detour into \cite{BJLP} and fractional Schr\"{o}dinger operators. 

\begin{enumerate}
\item[(a)]
The authors considered fractional Schr\"{o}dinger operator $$H=(-\Delta)^{\frac{\alpha}{2}}-\mu|x|^{-\alpha} \quad \text{(form-difference)}
$$ on $\mathbb R^d$ with coupling constant $0 \leq \mu \leq \kappa_{\frac{d-\alpha}{2}}$. Taking into account
\begin{equation}
\label{kappa}
\kappa_{\frac{d-\alpha}{2}}=\max_{1<p<\infty} \kappa_{\frac{d-\alpha}{p}},
\end{equation}
they proved, among other results, the following $L^p$ fractional Hardy inequality ($1<p<\infty$) for $H$ with $\mu \leq\kappa_{\frac{d-\alpha}{p}} $:
\begin{equation}
\label{H1}
\langle \kappa_{\frac{d-\alpha}{p}}|x|^{-\alpha}u,u^{p-1}\rangle \leq \langle (-\Delta)^{\frac{\alpha}{2}}u,u^{p-1}\rangle, 
\end{equation}
where $u$ vanishes at infinity sufficiently rapidly.
Here $\langle \cdot \rangle$ denotes integration, $\langle \cdot, \cdot\rangle$ the inner product in $L^2$ and $u^{p-1}:=u|u|^{p-2}$.
The inequality \eqref{H1} appears e.g.\,when one verifies condition $\langle Hu,u^{p-1}\rangle \geq 0$ in order to prove $L^p$ contractivity of the Schr\"{o}dinger semigroup:
$$
\|e^{-tH}f\|_p \leq \|f\|_p, \quad t>0.
$$
\end{enumerate}

\smallskip

From the physical perspective, $L^2$ is the natural space for  Schr\"{o}dinger and related operators \cite{JKS}.
One ventures into $L^p$ to obtain additional  
information about the semigroup $e^{-tH}$, but at expense of imposing more restrictive condition on the coupling constant, cf.\,\eqref{kappa}. In this regard, we give the following, only slightly informal definition.

\begin{definition}
We say that a space in which one considers a perturbation of the (fractional) Laplacian is intrinsic for this operator if it allows to handle the maximal value of the coupling constant. The Hardy inequality in this space will be called intrinsic.
\end{definition}

Thus, $L^2$ is an intrinsic space for $H$. An intrinsic Hardy inequality for $H$ is the usual  $L^2$ fractional Hardy inequality \eqref{ord_H}.

\begin{enumerate}
\item[(b)]
When $p \neq 2$, $\alpha<2$, Hardy inequality \eqref{H1}  has strictly better constant than the $L^p$ fractional Hardy inequality that was known previously, i.e.\,a consequence of 
\begin{equation}
\label{ord_H}
\kappa_{\frac{d-\alpha}{2}}\langle |x|^{-\alpha},v^2\rangle \leq \langle (-\Delta)^{\frac{\alpha}{2}}v,(-\Delta)^{\frac{\alpha}{4}}v\rangle,
\end{equation} (see \cite{KPS}, see also \cite{FLS}) and the celebrated Stroock-Varopoulos inequality
\begin{equation}
\label{SV}
\langle (-\Delta)^{\frac{\alpha}{4}}u^{\frac{p}{2}},(-\Delta)^{\frac{\alpha}{4}}u^{\frac{p}{2}}\rangle \leq \frac{p^2}{4(p-1)} \langle (-\Delta)^{\frac{\alpha}{4}}u,u^{p-1} \rangle.
\end{equation}
Namely, upon taking $v=u^{\frac{p}{2}}$, these two inequalities give \eqref{H1} with constant $\frac{4(p-1)}{p^2}\kappa_{\frac{d-\alpha}{2}}$ in the left-hand side.
Whenever $p \neq 2$, $\alpha<2$, one has
$$
\frac{4(p-1)}{p^2}\kappa_{\frac{d-\alpha}{2}}<\kappa_{\frac{d-\alpha}{p}}.
$$

\begin{figure}[ht]
\label{fig1}
\includegraphics[scale=0.5]{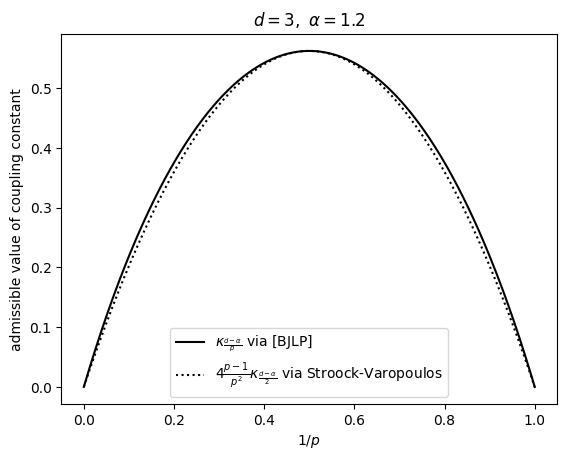}
\end{figure}

The authors of \cite{BJLP} show that the constant $\kappa_{\frac{d-\alpha}{p}}$ in their inequality \eqref{H1} is the best possible. 

\smallskip

\item[(c)]
The key element of their proof of \eqref{H1} is the following observation about the symmetries of $(-\Delta)^{\frac{\alpha}{2}}$: 
$$
\varphi_\beta(x)=|x|^{-\beta} \text{ is a Lyapunov function of $H$:} \quad (-\Delta)^{\frac{\alpha}{2}}\varphi_\beta=\kappa_\beta |x|^{-\alpha} \varphi_\beta
$$
and
$$
\varphi_\beta^p=\varphi_{p\beta}.
$$
The existence of a Lyapunov function is an abstract result: one can construct it from the heat kernel of the operator  \cite{BDK} (cf.\,\eqref{lyapunov_rep}). However, the fact that a power of the Lyapunov function is still a Lyapunov function, modulo adjusting the coupling constant, is specific to the fractional Laplacian. 

The Stroock-Varopoulos inequality \eqref{SV} is valid for all symmetric Markov generators and does not take these extra symmetries into account, see \cite{LS} for detailed discussion of \eqref{SV} and related inequalities.
\end{enumerate}

\medskip

We now return to the subject of this paper -- drift perturbation of the fractional Laplacian. Let us work temporarily on $\mathbb R^d$ and consider the strength of attraction to the origin $\nu$ strictly less than the critical value: 
$$\Lambda_\nu=(-\Delta)^{\frac{\alpha}{2}} + \nu \frac{x}{|x|^\alpha} \cdot \nabla, \quad 0 \leq \nu<\nu_\star.$$
The contractivity of the semigroup $e^{-t\Lambda_\nu}$ in $L^p$, i.e.
\begin{equation}
\label{contr_Lp}
\|e^{-t\Lambda_\nu}f\|_{p} \leq \|f\|_p
\end{equation}
can be established classically by combining \eqref{ord_H} and the Stroock-Varopoulos inequality \eqref{SV}, but at expense of imposing sub-optimal condition $\nu<\frac{p}{d-\alpha}4\frac{p^2}{p-1}\kappa_{\frac{d-\alpha}{2}}$.
In light of the result of \cite{BJLP}, the right way to proceed to prove \eqref{contr_Lp} is to use the next Hardy inequality. It is obtained from \eqref{H1} simply by integrating by parts in the drift term:
\begin{equation}
\label{H2}
\frac{p}{d-\alpha}\kappa_{\frac{d-\alpha}{p}}\langle - \frac{x}{|x|^\alpha} \cdot \nabla u,u^{p-1}\rangle \leq \langle (-\Delta)^{\frac{\alpha}{2}}u,u^{p-1}\rangle, \quad 1<p<\infty,
\end{equation}
where $u$ vanishes at infinity sufficiently rapidly. 
The dependence of the coupling constant $\nu =\frac{p}{d-\alpha}\kappa_{\frac{d-\alpha}{p}}$ on $p$ is as follows:

\begin{figure}[ht]
\label{fig2}
\includegraphics[scale=0.5]{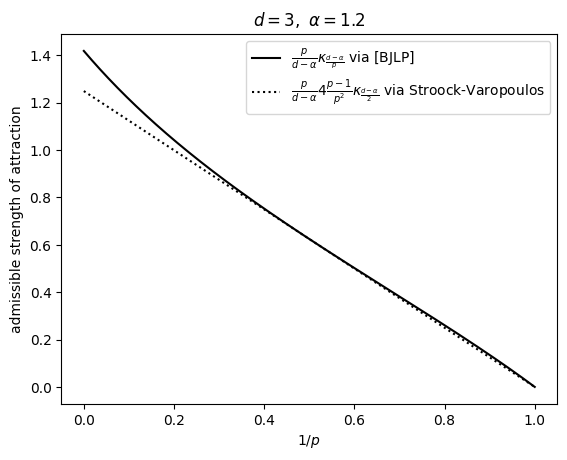}
\end{figure}

The maximal value of the coupling constant $\nu_\star$ is reached only in the limit:
\begin{equation}
\label{max_conv}
\frac{p}{d-\alpha}\kappa_{\frac{d-\alpha}{p}} \rightarrow \nu_\star \quad \text{ as } p \uparrow \infty.
\end{equation}
We are satisfied in this paper with the following two justifications for the maximality of $\nu_\star$, a formal one and an informal one. The formal one is that for every $p>1$ the constant in the Hardy inequality \eqref{H2} is the best possible -- this is proved in \cite{BJLP} (\eqref{H2} $\Leftrightarrow$ \eqref{H1}). The informal one is that the Lyapunov function (formal invariant density) for $\Lambda_\nu$ ceases to exist exactly as $\nu$ reaches $\nu_\star$, see remark 5 in the end of this introduction.

In order to approach $\nu_\star$  we need to work in $L^p$ with $p$ sufficiently large. This matches the fact that, from the probabilistic perspective, a natural space for diffusion operators\footnote{$C_\infty=\{f \in C(\mathbb R^d) \mid \lim_{x \rightarrow \infty}f(x)=0 \text{ with the sup-norm} \}$} is $C_\infty$ whose local topology is stronger than the one of $L^p$ regardless of how large $p$ is. In other words, $C_\infty$ is a good candidate for the intrinsic space of $\Lambda$. 
The problem is that by taking $p \rightarrow \infty$ in \eqref{contr_Lp}, \eqref{H2} directly, we arrive at a fundamental but also in some sense trivial a priori estimate
$$
\|e^{-t\Lambda_\nu}f\|_{\infty} \leq \|f\|_\infty, \quad t>0.
$$
Namely, there is no Hardy inequality in $L^\infty$ and it is not clear what could be the weak solution theory of the diffusion equation in this space. Further, regarding the $C_0$ semigroup theory of $\Lambda_\nu$ (which of course we cannot have in $L^\infty$), to reach $C_\infty$, it seems like we still need some kind of regularity theory of the Kolmogorov operator, which disappears as $p \uparrow \infty$, see remark (\textit{ii}) regarding De Giorgi's method in ``Further discussion''. So, arguably, by taking $p \rightarrow \infty$ we are losing information.
We can, however, pass to the limit $p \uparrow \infty$ indirectly and arrive to a non-trivial Hardy inequality for $\nu=\nu_\star$ and a weak solution theory. To this end, we formally take in \eqref{H2} $u=1+\frac{v}{p}$, arriving at
\begin{equation*}
\frac{p}{d-\alpha}\kappa_{\frac{d-\alpha}{p}}\left\langle - \frac{x}{|x|^\alpha} \cdot \nabla (1+\frac{v}{p}),(1+\frac{v}{p})^{p-1}\right\rangle \leq \langle (-\Delta)^{\frac{\alpha}{2}}(1+\frac{v}{p}),(1+\frac{v}{p})^{p-1}\rangle.
\end{equation*}
Both $\nabla$ and $(-\Delta)^{\frac{\alpha}{2}}$ annihilate constants, so multiplying the previous inequality by $p$, we obtain
\begin{equation*}
\frac{p}{d-\alpha}\kappa_{\frac{d-\alpha}{p}}\left\langle - \frac{x}{|x|^\alpha} \cdot \nabla v,(1+\frac{v}{p})^{p-1}\right\rangle \leq \langle (-\Delta)^{\frac{\alpha}{2}}v,(1+\frac{v}{p})^{p-1}\rangle.
\end{equation*}
Now we can take $p \uparrow \infty$, arriving at a non-trivial Hardy inequality -- this is our main observation in this paper -- but only after we address the formal nature of the above calculations. Indeed, $u=1+\frac{v}{p}$ is not an admissible function in \eqref{H2}. It is admissible if we work on the torus $\Pi^d$. Working on the torus requires adding in the last inequality a constant term with proper dependence on $p$.  One difficulty of working on the torus is that one no longer has a nice expression for the Lyapunov function: we need both appropriate behaviour of the Lyapunov function at the singularity and the scaling property, cf.\,(c) above. We will address this in Section \ref{prelim_sect}.

\subsection{Main results}
From now on, we work on the torus $\Pi^d$. We write
$$
\langle f \rangle:=\int_{\Pi^d} f(x) dx, \quad \langle f,g\rangle:=\langle f g\rangle
$$
(all functions are real-valued).
The vector field $b$ is defined by \eqref{b_def}, \eqref{nu_star}.

\begin{theorem}[A priori Hardy inequality]
\label{thm1}
Let $1<\alpha < 2$. 
There exists constant $c=c(d,\alpha)$ such that, for and every $u \in C^\infty=C^\infty(\Pi^d)$,
$$
\big|\bigg\langle b \cdot \nabla u, e^u \bigg\rangle\big| \leq \big\langle (-\Delta)^{\frac{\alpha}{2}}u,e^u\big\rangle + c \big\langle e^u\big\rangle
$$
and, furthermore, for all $\varepsilon>0$,
$$
\big|\bigg\langle b_\varepsilon \cdot \nabla u, e^u \bigg\rangle\big| \leq \big\langle (-\Delta)^{\frac{\alpha}{2}}u,e^u\big\rangle + c \big\langle e^u\big\rangle,
$$
where $b_\varepsilon:=e^{-\varepsilon (-\Delta)^{\frac{\alpha_1}{2}}} b$ (De Giorgi mollifier) with $0<\alpha_1 \leq 2$ fixed by $0 \leq \alpha \leq d-\alpha_1$  possibly after increasing $c$ to $c=c(d,\alpha,\alpha_1)$,.
\end{theorem}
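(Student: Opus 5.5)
The plan is to integrate by parts in the drift term, which reduces both inequalities to a Hardy-type bound for the potential $\operatorname{div} b$ tested against $e^u$. That bound is then proved with a logarithmic Lyapunov function, the formal $p\uparrow\infty$ limit of $\varphi_\beta=|x|^{-\beta}$ in (c).

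\emph{Step 1 (integration by parts and the easy direction).} Since $b\cdot\nabla u\, e^u=b\cdot\nabla e^u$, we have $\langle b\cdot\nabla u,e^u\rangle=-\langle \operatorname{div} b,\, e^u\rangle$. In $B_1(0)$ we have $\operatorname{div} b=\nu_\star(d-\alpha)|x|^{-\alpha}\ge 0$, which is integrable because $\alpha<d$. Outside $B_1(0)$, $b$ is smooth, and the jump of $b$ across $|x|=1$ produces a bounded surface-type contribution. After replacing the matching of $b$ at $|x|=1$ by a smooth cutoff, we get $\operatorname{div} b\ge -c$.

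This gives $\langle b\cdot\nabla u,e^u\rangle\le c\langle e^u\rangle$. Moreover $\langle(-\Delta)^{\frac\alpha2}u,e^u\rangle=\frac12\iint K(x,y)(u(x)-u(y))(e^{u(x)}-e^{u(y)})\,dx\,dy\ge 0$, where $K$ is the jump kernel of $(-\Delta)^{\frac\alpha2}$ on $\Pi^d$. So one sign of the absolute value is trivial. It remains to prove
$$
\nu_\star(d-\alpha)\langle |x|^{-\alpha}\mathbf 1_{B_1}e^u\rangle\le \langle(-\Delta)^{\frac\alpha2}u,e^u\rangle+c\langle e^u\rangle. \qquad (\dagger)
$$

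\emph{Step 2 (the logarithmic Lyapunov function).} Differentiating $(-\Delta)^{\frac\alpha2}|x|^{-\beta}=\kappa_\beta|x|^{-\alpha-\beta}$ at $\beta=0$ and using $\kappa_\beta=\beta\,\nu_\star+O(\beta^2)$ (from $\Gamma(\beta/2)\sim 2/\beta$), we get on $\mathbb R^d$
$$
(-\Delta)^{\frac\alpha2}\big(-\log|x|\big)=\nu_\star|x|^{-\alpha}.
$$
Hence $w:=-(d-\alpha)\log|x|$ satisfies $(-\Delta)^{\frac\alpha2}w=\nu_\star(d-\alpha)|x|^{-\alpha}$. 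This explains the value \eqref{nu_star}.

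On the torus I would take $w=-(d-\alpha)\eta(x)\log|x|$ with a cutoff $\eta$ equal to $1$ on $B_1$. By splitting the fractional Laplacian on $\Pi^d$ into the $\mathbb R^d$ singular part plus a smooth remainder kernel, I expect $(-\Delta)^{\frac\alpha2}w\ge \nu_\star(d-\alpha)|x|^{-\alpha}\mathbf 1_{B_1}-c$. This is the torus substitute for the explicit Lyapunov function mentioned in the introduction.

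\emph{Step 3 (ground state comparison, the main obstacle).} The key is to show
$$
\langle(-\Delta)^{\frac\alpha2}w,e^u\rangle\le\langle(-\Delta)^{\frac\alpha2}u,e^u\rangle+c\langle e^u\rangle.
$$
Written with the kernel, the difference is $\iint K(x,y)\,e^{u(x)}\big[(u-w)(x)-(u-w)(y)\big]\,dx\,dy$, whose sign is not pointwise. I would first try to obtain this as the $p\uparrow\infty$ limit of the BJLP inequality \eqref{H1}/\eqref{H2} on $\Pi^d$ with Lyapunov function $\varphi_\beta$, $\beta=\frac{d-\alpha}{p}$, applied to $u_p=1+\frac vp$ with $v=u$. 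The BJLP proof is a ground state representation: substitute $u=\varphi_\beta g$ and use an elementary convexity inequality for $t\mapsto t^{p-1}$.

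The point to check is that this convexity inequality survives the scaling $u=1+v/p$. The relevant inequality is $(1+\frac vp)^{p-1}\to e^{v}$, with the error in the elementary inequality being $O(1/p)$ times a term controlled by the left side. The torus corrections, which are constants of size $O(p)$ before multiplying by $p$ as in the introduction, must stay bounded after the rescaling. As a fallback, I would derive the limiting pointwise inequality directly: for $s=g(x)-g(y)$ with $g=u-w$, apply $e^{s}-1-s\ge 0$ to the symmetrized kernel expression. Then absorb the weight mismatch using $(-\Delta)^{\frac\alpha2}e^{w}\ge 0$ near $0$; this is a superharmonicity statement, since $e^w=|x|^{-(d-\alpha)}$ is the Riesz kernel. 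Combining Steps 2 and 3 gives $(\dagger)$.

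\emph{Step 4 (the mollified drift).} Since $e^{-\varepsilon(-\Delta)^{\frac{\alpha_1}{2}}}$ commutes with derivatives, $\operatorname{div} b_\varepsilon=e^{-\varepsilon(-\Delta)^{\frac{\alpha_1}{2}}}\operatorname{div} b$. For $\alpha\le d-\alpha_1$, the constant $\kappa_\alpha$ computed with $\alpha_1$ in place of $\alpha$ is $\ge 0$. Hence $|x|^{-\alpha}$ is $(-\Delta)^{\frac{\alpha_1}{2}}$-superharmonic near the origin, up to a bounded torus correction. The semigroup, being positivity preserving and sub-Markovian, then gives $e^{-\varepsilon(-\Delta)^{\frac{\alpha_1}{2}}}\big(|x|^{-\alpha}\mathbf 1_{B_1}\big)\le |x|^{-\alpha}\mathbf 1_{B_1}+c$ uniformly in $\varepsilon$. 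Positivity preservation also keeps $\operatorname{div} b_\varepsilon\ge -c$. Therefore the two bounds for $b$ transfer to $b_\varepsilon$, with $c=c(d,\alpha,\alpha_1)$.
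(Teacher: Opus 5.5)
Your outline is correct, but what actually proves $(\dagger)$ is your fallback, which is a genuinely different and more direct route than the paper's. Your first route is the paper's, but you place its difficulty in the wrong spot. Steps 1 and 4 coincide with the paper: the integration by parts of Proposition~\ref{hardy_kolm}, and the super-median argument of Lemma~\ref{approx_lem}.

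In the first route of Step 3 (Proposition~\ref{hardy_schr}, Corollary~\ref{cor_kolm}, then $p\uparrow\infty$), the Bregman term $J_t\ge 0$ holds for every $p$, so the convexity needs no check. What must be proved is that the torus error is $\frac{c}{p}\langle u^p\rangle$ with $c$ independent of $p$, not $O(p)$, so that it is still $O(1)$ after multiplying by $p$. The paper gets this from two ingredients. The scaling $\varphi_\beta^{p-1}=\varphi_{(p-1)\beta}$ turns $I_t^\ast$ into a term of the same type as $I_t$ with $\kappa_{(p-1)\beta}=\kappa_\beta$. The $O(\beta)$ bounds \eqref{g_est1}, \eqref{g_est2} control the torus corrections.

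The fallback works as follows. Take $\eta=a(|\cdot|)$, so that $w=\log\varphi_{d-\alpha}$ and $e^{w}=\varphi_{d-\alpha}\ge c_0$. With $g=u-w$ and $\sigma=g(x)-g(y)$ one has the exact identity $e^{u(x)}\sigma=e^{w(x)}\bigl(e^{g(x)}-e^{g(y)}\bigr)+e^{u(x)}\bigl(e^{-\sigma}-1+\sigma\bigr)$. Integrate against $e^{-tA}(x-y)/t$ instead of the p.v.\ kernel; everything converges absolutely for fixed $t>0$. Dropping the last, nonnegative, term gives
\[
\Bigl\langle \frac{u-e^{-tA}u}{t},e^{u}\Bigr\rangle \ge \Bigl\langle \frac{w-e^{-tA}w}{t},e^{u}\Bigr\rangle+\Bigl\langle \frac{\varphi_{d-\alpha}-e^{-tA}\varphi_{d-\alpha}}{t},e^{u-w}\Bigr\rangle .
\]
By duality the first term tends to $\langle w,Ae^u\rangle$. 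This is $\ge (d-\alpha)\nu_\star\langle |x|^{-\alpha}\mathbf 1_{B_1},e^u\rangle-c\langle e^u\rangle$ by your Step 2, read distributionally ($Aw$ has no atom at $0$).

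The second term is handled in three pieces:
\begin{enumerate}
\item The $\mathbb R^d$ part equals $c\,t^{-1}\int_0^t e^{-s(-\Delta)^{\frac{\alpha}{2}}}(x)\,ds\ge 0$, because $|x|^{-(d-\alpha)}$ is the Green function. This is your superharmonicity.
\item The torus part is bounded below as $t\downarrow 0$ exactly as in Step~1(b) of the proof of Proposition~\ref{hardy_schr}, with $\beta=d-\alpha$.
\item The bound $e^{-w}\le c_0^{-1}$ converts $-C\langle e^{u-w}\rangle$ into $-C\langle e^u\rangle$.
\end{enumerate}

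Since $\varphi_{(d-\alpha)/p}=e^{w/p}$, your three terms are exactly the limits of $pI_t$, $pI_t^\ast$, $pJ_t$ for the function $1+u/p$ with $\beta=\frac{d-\alpha}{p}$. The $I_t^\ast$ term loses its Hardy part because $\kappa_{(p-1)\beta}\to\kappa_{d-\alpha}=0$. So you are running the paper's ground-state argument directly at $p=\infty$.

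Your route needs torus corrections for only two fixed functions and no uniformity in $p$. It also shows $\nu_\star$ transparently through $(-\Delta)^{\frac{\alpha}{2}}\log\frac{1}{|x|}=\nu_\star|x|^{-\alpha}$. The paper pays for the $p$-bookkeeping, but in exchange obtains the torus $L^p$ Hardy inequalities (Propositions~\ref{hardy_schr}, \ref{hardy_kolm}) along the way. It uses only the power Lyapunov functions $|x|^{-\beta}$, $0<\beta<d-\alpha$, which matches its view of the hyperbolic Orlicz space as the $p\uparrow\infty$ limit of $L^p$.
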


\begin{remark*}
The absolute value in the left-hand side is needed because we do not specify how we extend $b$ to the torus outside of a neighbourhood of the singularity, cf.\,\eqref{H2}. Let us also note that in the case $\alpha=2$ (then necessarily $d \geq 3$) the Hardy inequality of Theorem \ref{thm1} can be obtained from the ordinary Hardy inequality on torus upon integration by parts both in the dispersion term and in the drift term. The gain in the constant in the Hardy inequality compared to the classical argument via Stroock-Varopoulos inequality is a purely non-local phenomenon.
\end{remark*}

Replacing $u$ by $-u$ and adding up the resulting inequalities, one arrives at the inequality
$$
\big|\bigg\langle b \cdot \nabla u, \sinh u \bigg\rangle\big| \leq \big\langle (-\Delta)^{\frac{\alpha}{2}}u,\sinh u\big\rangle + c \big\langle \cosh u\big\rangle.
$$
We will be using Theorem \ref{thm1} to analyze elliptic equation
$$
(\lambda+\Lambda_\varepsilon)u=f, \quad f \in C^\infty,
$$
where
 $$\Lambda_\varepsilon:=(-\Delta)^{\frac{\alpha}{2}} + b_\varepsilon \cdot \nabla,$$
 by actually applying it to $\frac{u}{s}$ for constant $s>0$ that will be chosen sufficiently small, i.e.\,by ``blowing up'' solution $u$. Thus, the term $c \big\langle \cosh u\big\rangle$, although it does not vanish when $u=0$, does not pose a problem. (In the proofs of Corollaries \ref{cor1} and \ref{cor2} we will be using the fact that $\frac{u}{s}$ is still a solution of the Kolmogorov equation with right-hand side $\frac{f}{s}$. In this sense, the test function $u \mapsto e^u$ requires the linearity of the equation.)

Theorem \ref{thm1} yields the following a priori contractivity estimate.
Set
$$\omega:=\frac{c}{2}(1+|\Pi^d|), \quad \text{$c$ is the constant from the Hardy inequality of Theorem \ref{thm1}}.$$

\begin{corollary}[Contractivity in the hyperbolic Orlicz space] 
\label{cor1}
We have
$$
\|e^{-t\Lambda_\varepsilon}f\|_{\cosh -1 } \leq e^{\omega t}\|f\|_{\cosh -1 }, \quad t>0,
$$
for every $\varepsilon>0$.
\end{corollary}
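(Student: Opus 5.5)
We argue at the level of the Luxemburg norm
$$
\|g\|_{\cosh-1}=\inf\bigl\{s>0 \mid \langle \cosh(g/s)-1\rangle \leq 1\bigr\}.
$$
The idea is to run a differential inequality for the Orlicz functional along the semigroup, evaluated at a scale $s_t$ that grows like $e^{\omega t}$. By density and lower semicontinuity of the Luxemburg norm (Fatou), it suffices to treat $f \in C^\infty$. For fixed $\varepsilon>0$ the drift $b_\varepsilon$ is smooth on $\Pi^d$, so $u(t)=e^{-t\Lambda_\varepsilon}f$ is smooth in $(t,x)$ by standard parabolic theory on the torus. In particular $u(t)\in C^\infty$, so Theorem \ref{thm1} (its $b_\varepsilon$-version) applies to every multiple $u(t)/s$.

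Fix $s_0>\|f\|_{\cosh-1}$, so that $\langle\cosh(f/s_0)-1\rangle\le 1$. Put $s_t:=e^{\omega t}s_0$, $w:=u/s_t$, and $F(t):=\langle \cosh w-1\rangle$. Since $\partial_t u=-\Lambda_\varepsilon u$ and $\partial_t s_t=\omega s_t$, we get
$$
F'(t)=-\langle \Lambda_\varepsilon w,\sinh w\rangle-\omega\langle w\sinh w\rangle .
$$
Here we use the linearity of the equation, as in the remark after Theorem \ref{thm1}. For the first term we apply the $\sinh$-form of Theorem \ref{thm1} for $b_\varepsilon$: adding the inequalities for $w$ and $-w$ gives
$$
\langle (-\Delta)^{\frac{\alpha}{2}}w+b_\varepsilon\cdot\nabla w,\sinh w\rangle\ge -c\langle\cosh w\rangle .
$$
Hence
$$
-\langle \Lambda_\varepsilon w,\sinh w\rangle\le c\langle\cosh w\rangle = cF+c|\Pi^d| .
$$

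For the second term we use the elementary pointwise inequality $w\sinh w\ge 2(\cosh w-1)$. Comparing Taylor series, the coefficients satisfy $\frac{1}{(2k-1)!}\ge\frac{2}{(2k)!}$ for all $k\ge1$. Therefore $-\omega\langle w\sinh w\rangle\le-2\omega F=-c(1+|\Pi^d|)F$. Combining the two bounds,
$$
F'(t)\le cF+c|\Pi^d|-c(1+|\Pi^d|)F=c|\Pi^d|\,(1-F(t)).
$$
Thus $(F-1)'\le -c|\Pi^d|(F-1)$, and since $F(0)-1\le 0$, Gronwall gives $F(t)\le 1$ for all $t>0$. By the definition of the Luxemburg norm this means $\|u(t)\|_{\cosh-1}\le s_t=e^{\omega t}s_0$. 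Letting $s_0\downarrow\|f\|_{\cosh-1}$ yields the claim.

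The only genuinely non-elementary input is Theorem \ref{thm1}, which we assume, so the main point to check is the justification of the time differentiation. This needs smoothness of $u$ up to $t=0$, and enough integrability that $\langle\cosh(u/s_t)\rangle$ is differentiable in $t$. On the compact torus with smooth $b_\varepsilon$ and smooth $f$, $u$ is bounded with bounded derivatives, so differentiation under the integral is legitimate. The passage from smooth $f$ to general $f$ in the Orlicz space needs approximation by $f_n\in C^\infty$ with $\|f_n\|_{\cosh-1}\to\|f\|_{\cosh-1}$; for instance truncation followed by mollification works. We then pass to the limit using the $L^2$-continuity of $e^{-t\Lambda_\varepsilon}$ and Fatou's lemma.
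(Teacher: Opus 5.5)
Your proof is correct and yields exactly the paper's constant $\omega=\frac{c}{2}(1+|\Pi^d|)$, but it takes a different route.

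The paper works at a fixed scale. It damps the equation by $\lambda=\frac{c}{2}$, integrates in time and symmetrizes to obtain the modular bound $\langle\cosh(v(t)/s)-1\rangle\le\langle\cosh(f/s)-1\rangle+c|\Pi^d|t$, valid for every $s>0$. It then turns the additive error into a multiplicative one: it slices $[0,t]$ into $k$ pieces, uses $\cosh(x/\sqrt{1-\gamma})-1\ge(1-\gamma)^{-1}(\cosh x-1)$, iterates with the semigroup property, lets $k\to\infty$, and finally undoes the factor $e^{-\lambda t}$.

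You instead let the Luxemburg scale grow, $s_t=e^{\omega t}s_0$. The term $-\omega\langle w\sinh w\rangle\le -c(1+|\Pi^d|)F$ produced by differentiating $s_t$ does both of the paper's jobs at once. It absorbs the $cF$ part of $c\langle\cosh w\rangle$, which is the role of the paper's damping. The leftover $-c|\Pi^d|F$ cancels the constant $c|\Pi^d|$ exactly at the level $F=1$ that defines the norm, which is the role of the time slicing. Gronwall on $F-1$ then closes the argument in one line.

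Your version is shorter and needs neither the semigroup property nor the limit $k\to\infty$. The paper's version isolates a scale-uniform modular inequality, the same ``blow-up in $s$'' device it reuses for uniqueness in Corollary~\ref{cor2}. It also leaves the passage from modular to norm as a purely Orlicz-space step. Your reduction to smooth $f$ via truncation, mollification and Fatou fills in a point the paper leaves implicit.
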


Recall that the Orlicz norm $\|\cdot\|_\Phi$ is defined as
$$
\|v\|_\Phi:=\inf \big\{s>0 \mid \langle \Phi(\frac{v}{s})\rangle \leq 1 \big\}.
$$
Selecting $\Phi(t):=t^p$ recovers the Lebesgue space $L^p$. We take $\Phi(t)=\cosh t - 1$.

We can summarize Theorem \ref{thm1} and Corollary \ref{cor1} by saying that the hyperbolic Orlicz space  ($=$ the completion of $C^\infty(\Pi^d)$ with respect to $\|\cdot\|_{\cosh -1 }$) is an intrinsic space for the fractional Kolmogorov operator. The corresponding intrinsic Hardy inequality is the one contained in Theorem \ref{thm1}. (We ignore for a moment the issue with a priori versus a posteriori variants of these results.)

\medskip

\medskip

For the applications of Theorem \ref{thm1} to the non-local diffusion equation $(\lambda+(-\Delta)^{\frac{\alpha}{2}} + b\cdot \nabla)u=f$ with drift $b=\lim_{\varepsilon \downarrow 0}b_\varepsilon$ given by \eqref{b_def}, \eqref{nu_star}, it is unreasonable to expect smoothness of $u$ (e.g.\,even having control over square integrability of $(-\Delta)^{\frac{\alpha}{2}}u$ is problematic). What regularity can be expected, i.e.\,what estimates survive the passage $\varepsilon \downarrow 0$, is seen from the following a priori estimates. Let $u_\varepsilon$ be solution to the elliptic non-local diffusion equation
$$
(\lambda +  \Lambda_\varepsilon)u_\varepsilon =f, \quad f \in C^\infty.
$$

\begin{enumerate}

\item[--]
The first a priori estimate is straightfroward (in particular, from the probabilistic perspective):
\begin{equation}
\label{disp1}
\tag{$A_1$}
\|u_\varepsilon\|_\infty \leq \frac{1}{\lambda}\|f\|_\infty.
\end{equation}

\item[--]
The second a priori estimate is obtained by multiplying the equation by $u_\varepsilon$ and integrating by parts:
$$
\lambda \langle u_\varepsilon^2\rangle+\langle | (-\Delta)^{\frac{\alpha}{4}}u_\varepsilon|^2\rangle \leq \frac{1}{2} \langle {\rm div\,}b_\varepsilon,u_\varepsilon^2\rangle \leq \frac{\|u_\varepsilon\|_\infty^2}{2} \sup_{\varepsilon>0} \langle  {\rm div\,}b_\varepsilon\rangle.
$$
(Since the coefficient of the drift is large, we cannot use the dispersion term to control  the drift term via the usual fractional Hardy inequality.) So,
\begin{equation}
\label{disp2}
\tag{$A_2$}
\lambda\langle u_\varepsilon^2 \rangle + \langle | (-\Delta)^{\frac{\alpha}{4}}u_\varepsilon|^2\rangle \leq \frac{C}{\lambda^2}\|f\|_\infty^2.
\end{equation}
\end{enumerate}

Thus, to study the non-local diffusion equation with singular drift $b$ and $f \in L^\infty$ (we are interested in probabilistic applications, so condition $f \in L^\infty$ is reasonable), we propose the following a posteriori variant of the Hardy inequality. The constant $c$ is from Theorem \ref{thm1}.

\begin{theorem} [A posteriori Hardy inequality]
\label{thm2}
For every $u \in L^\infty$ such that $ (-\Delta)^{\frac{\alpha}{4}}u \in L^2=L^2(\Pi^d)$,
$$
\big|\bigg\langle T\,(\lambda+ (-\Delta)^{\frac{\alpha}{2}})^{\frac{1}{2}}u, (\lambda+ (-\Delta)^{\frac{\alpha}{2}})^{\frac{1}{2}}e^u \bigg\rangle\big| \leq \big\langle  (-\Delta)^{\frac{\alpha}{4}}u, (-\Delta)^{\frac{\alpha}{4}} e^u\big\rangle + c \langle e^u\rangle,
$$
where $T$ is a bounded operator on $L^2$ given on $C^\infty$ by the formula
$$
T=(\lambda+ (-\Delta)^{\frac{\alpha}{2}})^{-\frac{1}{2}}b \cdot \nabla (\lambda+ (-\Delta)^{\frac{\alpha}{2}})^{-\frac{1}{2}},
$$
with fixed $\lambda \geq \lambda_{d,\alpha}$.
\end{theorem}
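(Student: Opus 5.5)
We derive Theorem \ref{thm2} from the second inequality of Theorem \ref{thm1} (the one with the mollified drift $b_\varepsilon$) by a double approximation: first $u$ is smoothed, then $\varepsilon\downarrow 0$. We first check that the left-hand side makes sense. The inner product
$$\langle T(\lambda+(-\Delta)^{\frac{\alpha}{2}})^{\frac12}u,(\lambda+(-\Delta)^{\frac{\alpha}{2}})^{\frac12}e^u\rangle$$
requires that $(\lambda+(-\Delta)^{\frac{\alpha}{2}})^{\frac12}u\in L^2$, which holds by assumption. It also requires $(\lambda+(-\Delta)^{\frac{\alpha}{2}})^{\frac12}e^u\in L^2$. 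This follows since $u\in L^\infty$ and $\xi\mapsto e^\xi$ is Lipschitz on $[-\|u\|_\infty,\|u\|_\infty]$, so the Gagliardo seminorm gives $\|(-\Delta)^{\frac{\alpha}{4}}e^u\|_2\le e^{\|u\|_\infty}\|(-\Delta)^{\frac{\alpha}{4}}u\|_2$.

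The boundedness of $T$ on $L^2$ is asserted in the statement. We would verify it by showing that
$$|\langle b\cdot\nabla f,g\rangle|\le C\|(\lambda+(-\Delta)^{\frac{\alpha}{2}})^{\frac12}f\|_2\,\|(\lambda+(-\Delta)^{\frac{\alpha}{2}})^{\frac12}g\|_2 .$$
To prove this, write $|b|\le \nu_\star|x|^{1-\alpha}$ near $0$, so that $|b|\,|\nabla f|\,|g|$ involves $|x|^{1-\alpha}$. Split the derivative using $\nabla=\nabla(\lambda+(-\Delta)^{\frac{\alpha}{2}})^{-\frac{1}{\alpha}}\cdot(\lambda+(-\Delta)^{\frac{\alpha}{2}})^{\frac{1}{\alpha}}$ and apply a weighted Hardy/Stein–Weiss inequality with weight $|x|^{1-\alpha}$. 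The exponents balance because $1-\alpha+1=2-\alpha$ matches the total smoothing available, namely $\alpha$ derivatives minus one. The same bound holds for $b_\varepsilon$ uniformly in $\varepsilon$, since $|b_\varepsilon|\le C\,e^{-\varepsilon(-\Delta)^{\alpha_1/2}}|x|^{1-\alpha}\le C'|x|^{1-\alpha}$; this step uses the condition $\alpha\le d-\alpha_1$ and positivity of the heat kernel. It follows that $T_\varepsilon\to T$ strongly on $L^2$, by dominated convergence on a dense set together with the uniform bound.

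\textbf{Step 1 (smooth $u$).} For $u\in C^\infty$, Theorem \ref{thm1} with $b_\varepsilon$ gives
$$|\langle b_\varepsilon\cdot\nabla u,e^u\rangle|\le\langle(-\Delta)^{\frac{\alpha}{2}}u,e^u\rangle+c\langle e^u\rangle .$$
We rewrite the left side as $\langle T_\varepsilon(\lambda+(-\Delta)^{\frac{\alpha}{2}})^{\frac12}u,(\lambda+(-\Delta)^{\frac{\alpha}{2}})^{\frac12}e^u\rangle$ and the first term on the right as $\langle(-\Delta)^{\frac{\alpha}{4}}u,(-\Delta)^{\frac{\alpha}{4}}e^u\rangle$, which is legitimate since everything is smooth. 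Letting $\varepsilon\downarrow0$ and using the strong convergence $T_\varepsilon\to T$ yields the inequality of Theorem \ref{thm2} for smooth $u$.

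\textbf{Step 2 (general $u$).} For general $u$, set $u_n=e^{-\frac1n(-\Delta)^{\frac{\alpha}{2}}}u$. Then $u_n\in C^\infty$, $\|u_n\|_\infty\le\|u\|_\infty$ by the Markov property, $u_n\to u$ a.e. along a subsequence, and $(\lambda+(-\Delta)^{\frac{\alpha}{2}})^{\frac12}u_n\to(\lambda+(-\Delta)^{\frac{\alpha}{2}})^{\frac12}u$ in $L^2$. The main obstacle is to show that $(-\Delta)^{\frac{\alpha}{4}}e^{u_n}\to(-\Delta)^{\frac{\alpha}{4}}e^u$ in $L^2$. Weak convergence is easy: the sequence is bounded by the Lipschitz bound above, and $e^{u_n}\to e^u$ in $L^2$ by dominated convergence. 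Weak convergence suffices for both pairings, because each pairs against a strongly convergent sequence (namely $T(\lambda+(-\Delta)^{\frac{\alpha}{2}})^{\frac12}u_n$ and $(-\Delta)^{\frac{\alpha}{4}}u_n$). For the remaining terms, $\langle e^{u_n}\rangle\to\langle e^u\rangle$ by bounded convergence, and the $\lambda$-part of the pairing converges by $L^2$ convergence. Passing to the limit in the inequality from Step 1 gives the claim.
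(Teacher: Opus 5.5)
Your Step 2 matches the paper's argument and is fine. Like the paper, you smooth by $e^{-\varepsilon A}$, use the bound $\|A^{\frac12}e^{u_\varepsilon}\|_2\le e^{\|u\|_\infty}\|A^{\frac12}u_\varepsilon\|_2$ from \eqref{torus_frac_form}, get weak convergence of $(\lambda+A)^{\frac12}e^{u_\varepsilon}$, and pair it against strongly convergent sequences. Your Step 1 detour through $b_\varepsilon$ and $T_\varepsilon\to T$ is unnecessary. The first inequality of Theorem \ref{thm1} already holds for $b$ itself and every $u\in C^\infty$. For such $u$, the left-hand side of Theorem \ref{thm2} is just $\langle b\cdot\nabla u,e^u\rangle$ by the defining formula of $T$. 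Even if you started from $b_\varepsilon$, for fixed smooth $u$ you would only need $b_\varepsilon\to b$ in $L^1$. Dropping the detour also removes your need for $\sup_{\varepsilon}\|T_\varepsilon\|_{2\to2}<\infty$.

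The genuine gap is the $L^2$-boundedness of $T$. This is part of the statement, the paper proves it in its Step 1, and your sketch does not establish it. You propose to control $|b|\,|\nabla f|\,|g|$ using only $|b|\le\nu_\star|x|^{1-\alpha}$ and a weighted Hardy/Stein--Weiss inequality. No such bound holds: for $(\lambda+A)^{\frac12}f\in L^2$, $\nabla f$ has regularity $\frac{\alpha}{2}-1<0$ and need not be a function. Concretely, take $g\equiv1$ and $f_k=k^{-1}\chi(x)\sin(kx_1)$ with $\chi\in C_c^\infty(B_1(0))$ and $\chi=1$ on $B_{1/2}(0)$. Then $\|(\lambda+A)^{\frac12}f_k\|_2\lesssim k^{\frac{\alpha}{2}-1}\to0$, whereas $\int|b||\nabla f_k|\ge\nu_\star\int_{B_{1/2}(0)}|\cos(kx_1)|\,dx\to\frac{2}{\pi}\nu_\star|B_{1/2}(0)|$. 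Keeping the sign, $\langle b\cdot\nabla f_k,1\rangle=-\langle{\rm div\,}b,f_k\rangle\to0$, as it must.

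More generally, an argument that sees only the size of $b$ would apply verbatim to rough drifts of size $|x|^{1-\alpha}$, for which $T$ is unbounded. Scaling is not the obstruction (it does balance, $1+(\alpha-1)=\alpha$, though your arithmetic $2-\alpha$ versus $\alpha-1$ is a slip).

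The missing idea is that the single derivative must be shared between the two factors, and this uses the structure of $b$, not just its size. The paper applies Stein interpolation to $T_z=(\lambda+A)^{-z}b\cdot\nabla(\lambda+A)^{-1+z}$. At $\Real\,z=0$ the whole derivative falls on $f$, and only $|b|$ and the fractional Hardy inequality are used. At $\Real\,z=1$ one integrates by parts so the derivative falls on the other side, at the cost of a ${\rm div\,}b$ term. Your uniform bound for $T_\varepsilon$ rests again only on $|b_\varepsilon|$ and has the same defect; the paper's \eqref{unif_T} also invokes the divergence control \eqref{approx_dom}.
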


As a consequence of Theorem \ref{thm2}, we obtain existence and uniqueness of the weak solution to the elliptic non-local diffusion equation \eqref{el_eq}. In detail:

\begin{definition}
Let $f \in L^\infty$. A function $u \in L^\infty$ such that $ (-\Delta)^{\frac{\alpha}{4}}u \in L^2$ is said to be a weak solution to the elliptic non-local diffusion  equation
\begin{equation}
\label{el_eq}
(\lambda+  (-\Delta)^{\frac{\alpha}{2}} +b\cdot \nabla)u=f,
\end{equation}
with $b$ given by \eqref{b_def}, \eqref{nu_star},
if identity
$$
\lambda\langle u,\varphi\rangle + \langle  (-\Delta)^{\frac{\alpha}{4}}u, (-\Delta)^{\frac{\alpha}{4}}\varphi\rangle + \langle T(\lambda+ (-\Delta)^{\frac{\alpha}{2}})^{\frac{1}{2}}u,(\lambda+A)^{\frac{1}{2}}\varphi\rangle=\langle f,\varphi\rangle,
$$
holds
for all test functions $\varphi \in L^2$ such that $ (-\Delta)^{\frac{\alpha}{4}}\varphi \in L^2$. 
\end{definition}

\begin{corollary}
\label{cor2}
Let $\lambda>c \vee \lambda_{d,\alpha}$. For every $f \in L^\infty$, there exists a unique weak solution to equation \eqref{el_eq}. It satisfies
$$
\|u\|_{\cosh - 1} \leq \frac{1}{\lambda-\omega_0}\|f\|_{\cosh - 1}, \quad \lambda>\omega_0,
$$
where $\omega_0=c \vee \lambda_{d,\alpha} \vee \omega$ (from Corollary \ref{cor1}).
\end{corollary}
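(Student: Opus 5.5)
We prove Corollary \ref{cor2} in two stages: an a priori estimate for the regularized problem via Theorem \ref{thm1}, then a passage to the limit $\varepsilon \downarrow 0$ using Theorem \ref{thm2} together with \eqref{disp1}, \eqref{disp2}.

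\emph{A priori estimate.} Fix $f \in C^\infty$ and let $u_\varepsilon$ solve $(\lambda+\Lambda_\varepsilon)u_\varepsilon=f$; since $b_\varepsilon$ is smooth, $u_\varepsilon \in C^\infty$. For $s>0$ we test the equation for $u_\varepsilon/s$ (with right-hand side $f/s$) against $\sinh(u_\varepsilon/s)$. By the $\sinh$/$\cosh$ form of Theorem \ref{thm1} for $b_\varepsilon$,
$$
\lambda\langle \tfrac{u_\varepsilon}{s},\sinh\tfrac{u_\varepsilon}{s}\rangle \leq \langle \tfrac{f}{s},\sinh\tfrac{u_\varepsilon}{s}\rangle + c\langle\cosh\tfrac{u_\varepsilon}{s}\rangle .
$$
We then use the Young-type inequality $a\sinh b \leq (\cosh a-1)+b\sinh b-(\cosh b-1)$, which is the Fenchel inequality for the convex function $\Phi=\cosh-1$ with $b\Phi'(b)-\Phi(b)=\Phi^*(\Phi'(b))$. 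We also use $\cosh=(\cosh-1)+1$ and $t\sinh t\geq 2(\cosh t-1)$. Choosing $s=\|f\|_{\cosh-1}/(\lambda-\omega_0)$ should then give $\langle\cosh(u_\varepsilon/s)-1\rangle\leq 1$. The constant $|\Pi^d|$ coming from $c\langle 1\rangle$ is absorbed into $\omega$, as in Corollary \ref{cor1}. Alternatively, we can integrate the semigroup bound of Corollary \ref{cor1} in $t$ against $e^{-\lambda t}$, using the convexity of the Orlicz norm, to obtain $\|u_\varepsilon\|_{\cosh-1}\leq(\lambda-\omega)^{-1}\|f\|_{\cosh-1}$ directly. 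This is the cleaner route, and we take it.

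\emph{Existence.} For $f\in L^\infty$ we mollify $f$ and use \eqref{disp1} and \eqref{disp2} to obtain uniform bounds on $u_\varepsilon$ in $L^\infty$ and in $\{(-\Delta)^{\alpha/4}u\in L^2\}$. We extract a subsequence converging weakly in that energy space and weak-$*$ in $L^\infty$. In the weak formulation the drift term reads $\langle T_\varepsilon(\lambda+(-\Delta)^{\alpha/2})^{1/2}u_\varepsilon,(\lambda+(-\Delta)^{\alpha/2})^{1/2}\varphi\rangle$, with $T_\varepsilon$ defined as $T$ but with $b_\varepsilon$ in place of $b$. To pass to the limit we need $T_\varepsilon\to T$ strongly in $L^2$ and uniform boundedness of $\|T_\varepsilon\|$ for $\lambda\geq\lambda_{d,\alpha}$. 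Both follow from $|b|\lesssim|x|^{1-\alpha}$, the fractional Hardy inequality \eqref{ord_H}, and the fact that $e^{-\varepsilon(-\Delta)^{\alpha_1/2}}$ preserves this bound, which is exactly the role of the hypothesis $\alpha\le d-\alpha_1$. Weak convergence of $u_\varepsilon$ combined with strong convergence of $T_\varepsilon^*$ applied to the fixed test function then suffices. The Orlicz bound passes to the limit by lower semicontinuity (Fatou along an a.e.\ subsequence, which we get from compactness of the embedding on the torus), and then extends to $f\in L^\infty$ by approximation.

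\emph{Uniqueness.} This is the main obstacle, since the drift is too strong to control the energy via \eqref{ord_H}, so the $L^2$ energy method fails. We plan to use Theorem \ref{thm2} instead. Let $w$ be the difference of two weak solutions, so $w$ is a weak solution with $f=0$, and put $v=w/s$. We want to test with $\varphi=e^{v}$ (and then $e^{-v}$), which is admissible because $v\in L^\infty$ and $(-\Delta)^{\alpha/4}e^v\in L^2$. This gives
$$
\lambda\langle v,e^v\rangle + \langle(-\Delta)^{\alpha/4}v,(-\Delta)^{\alpha/4}e^v\rangle + \langle T(\cdot)v,(\cdot)e^v\rangle = 0 .
$$
By Theorem \ref{thm2} the last two terms sum to at least $-c\langle e^v\rangle$. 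Adding the corresponding identity for $-v$ yields $\lambda\langle v\sinh v\rangle\leq c\langle\cosh v\rangle$ for every $s>0$. Letting $s\downarrow 0$, the left side grows like $\lambda\langle |v|e^{|v|}\rangle/2$ while the right side grows like $c\langle e^{|v|}\rangle/2$. If $w\neq 0$ on a set of positive measure, this is impossible for $\lambda>c$; on the set where $w=0$ the right side only contributes a bounded amount. Hence $w=0$. The delicate points are the admissibility of $e^v$ in the weak formulation and checking that the limiting argument on $\{w\neq0\}$ indeed uses $\lambda>c$ correctly.
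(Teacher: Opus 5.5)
Your outline is sound. For existence and uniqueness it follows the paper's proof: approximation plus compactness from \eqref{disp1}, \eqref{disp2}; then testing the difference of two solutions with $e^{\pm v}$, applying Theorem~\ref{thm2}, and blowing up $v=w/s$.

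\emph{One step is not justified as written.} To pass to the limit in the drift term you need $T^\ast(b_\varepsilon)\psi\to T^\ast(b)\psi$ in norm. The ingredients you cite (the bound $|b|\lesssim|x|^{1-\alpha}$, Hardy's inequality, the mollifier bound) give only the uniform bound \eqref{unif_T}, not convergence. Moreover, strong convergence of $T_\varepsilon$ would yield only \emph{weak} convergence of $T_\varepsilon^\ast$. That cannot be paired with the merely weakly convergent $(\lambda+A)^{\frac12}u_\varepsilon$. The paper has only this weak convergence. That is why it passes, via Mazur's lemma and a diagonal argument, to convex combinations $\hat b_n$, which still satisfy the a priori bounds and Theorem~\ref{thm1}.

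Your direct claim is nevertheless true, but it needs its own argument. For $\psi\in C^\infty$ and $h:=(\lambda+A)^{-\frac12}\psi$,
\[
\big(T^\ast(b_\varepsilon)-T^\ast(b)\big)\psi=(\lambda+A)^{-\frac12}\big[({\rm div\,}b-{\rm div\,}b_\varepsilon)h+(b-b_\varepsilon)\cdot\nabla h\big].
\]
Each piece tends to zero:
\begin{itemize}
\item $b_\varepsilon\to b$ in $L^2$, since $|b|\lesssim|x|^{1-\alpha}\in L^2$;
\item ${\rm div\,}b_\varepsilon=E_\varepsilon{\rm div\,}b\to{\rm div\,}b$ in $L^r$ for $r<d/\alpha$;
\item $(\lambda+A)^{-\frac12}:L^r\to L^2$ is bounded for $r\geq\frac{2d}{d+\alpha}$, a nonempty range because $\alpha<d$.
\end{itemize}
Density and \eqref{unif_T} then give strong convergence on all of $L^2$, and Mazur's lemma becomes unnecessary.

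\emph{The Orlicz estimate.} Here you genuinely add something: the paper's proof of Corollary~\ref{cor2} establishes only existence and uniqueness and never derives the displayed bound. Your route supplies it. Take the Laplace transform of Corollary~\ref{cor1} for $\Lambda_\varepsilon$, using that the Luxemburg norm is a norm. Use $\|E_\varepsilon f\|_{\cosh-1}\le\|f\|_{\cosh-1}$, which holds by Jensen because the mollifier is Markov and preserves $\langle\cdot\rangle$. Extract an a.e.\ convergent subsequence from the compact embedding, apply Fatou, and identify the limit with the weak solution by uniqueness. This even gives $\omega$ in place of $\omega_0$.

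You were right to prefer this to the direct $\sinh(u_\varepsilon/s)$ test. Since $\cosh-1$ is not homogeneous and the term $c|\Pi^d|$ does not scale with $s$, the direct test does not obviously produce the factor $(\lambda-\omega_0)^{-1}$. Corollary~\ref{cor1} gets around this by splitting time into short steps.

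\emph{Uniqueness.} Admissibility of $e^{v}$ is immediate from \eqref{torus_frac_form} and the mean value theorem. Your pointwise argument, using $|v|\tanh|v|\to\infty$ on $\{w\neq0\}$, in fact works for every $\lambda\geq\lambda_{d,\alpha}$. It is the paper's use of $v\sinh v\geq\cosh v-1$ that requires $\lambda>c$.
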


\subsection{Remarks} 
1.~The estimate in Corollary \ref{cor2} cannot be obtained by interpolating between  $\|u\|_{\infty} \leq \frac{1}{\lambda}\|f\|_{\infty}$ and an $L^p$ contractivity estimate since for the maximal value of the coupling constant $\nu=\nu_\star$ there is no $L^p$ contractivity estimate to interpolate from. This is in contrast to \cite{BJLP} who always have $L^2$ theory -- Schr\"{o}dinger resolvent and semigroup -- for the maximal value of their coupling constant.

2.~Corollary \ref{cor2} shows that there exists a theory of elliptic equation \eqref{el_eq} with critical drift \eqref{b_def}, \eqref{nu_star}. One can now pose further questions such as whether \eqref{el_eq} determines the resolvent of the generator of a $C_0$ semigroup in the hyperbolic Orlicz space and, furthermore, whether this is a semigroup of integral operators and what are its regularity properties. We hope to address these questions  in subsequent papers.

3.~(Semigroup) Let $\alpha=2$, $d \geq 3$. Armed with the straightforward parabolic analogue of the a priori estimate \eqref{disp2}, the test function $v \mapsto  e^v$ (or $v \mapsto \sinh v$) and the idea of a ``blow up'' of $v$ (which is the idea behind the Orlicz norm),
one can construct an operator realization of $\Lambda=-\Delta + b \cdot \nabla$ as the generator of strongly continuous semigroup in the hyperbolic Orlicz space. The proof consists of showing that solutions $\{v_n\}$ to
$$
(\partial_t - \Delta + b_n \cdot \nabla)v_n=0, \quad v_n|_{t=0}=f \in L^\infty \cap L^1, \quad b_n:=b_{\varepsilon_n},\quad \varepsilon_n \downarrow 0,
$$
constitute a Cauchy sequence:
$$
\sup_{t \in [0,1]}\|v_n(t,\cdot)-v_m(t,\cdot)\|_{\cosh - 1} \rightarrow 0 \quad \text{ as } n,m \rightarrow \infty.
$$
The latter requires showing that 
\begin{equation}
\label{conv9}
\int_0^t \langle (b_n-b_m)\cdot \nabla v_{n}, e^{v_n-v_m}\rangle \rightarrow 0,
\end{equation}
which follows easily upon applying Cauchy-Schwarz and using $\|b_n-b_m\|_2 \rightarrow \infty$, $\sup_{n,m}\|v_n-v_m\|_\infty<\infty$. See \cite{Ki_Orlicz} for details.

In the non-local case $1<\alpha<2$ that argument is not applicable, at least not directly: \eqref{disp2}, or rather its parabolic counterpart, no longer provide an estimate on the gradient of solution in \eqref{conv9}. However, in the sub-critical case $\nu<\nu_\star$ one can construct strongly continuous semigroup for $(-\Delta)^{\frac{\alpha}{2}} + \nu\frac{x}{|x|^\alpha} \cdot \nabla$ in $L^p$, for appropriately chosen $p \uparrow \infty$ as $\nu \uparrow \nu_\star$, using a compactness argument and some auxiliary ``desingularizing'' estimates for this operator; in the critical case $\nu=\nu_\star$ these estimates are not available (remark 5 below).
We cautiously suppose that the following approach to constructing the semigroup when $1<\alpha<2$, $\nu=\nu_\star$ could be useful:  consider approximation  $b_n=(1-1/n)b$, so that every $b_n$ is still singular, but now belongs to the sub-critical regime. The advantage of dealing with thus defined $b_n$ is that one has an analogue of \eqref{conv9} involving operators $T(b_n-b_m)=(\frac{1}{n}-\frac{1}{m})T(b)$ whose norms thus tend to zero as $n,m \rightarrow \infty$ (otherwise, for smooth $b_n$, $b_m$ these operators converge only weakly in $L^2$ or, upon reshuffling $\{b_n\}$ using Mazur's Lemma and the diagonal argument, strongly, cf.\,the proof of Corollary \ref{cor2}, but not in the norm). This approach requires obtaining an analogue of Corollary \ref{cor2} for the parabolic Kolmogorov equation, which seems to be possible with some additional efforts.

On $\mathbb R^d$, in the local case $\alpha=2$, one can still construct the semigroup in the hyperbolic Orlicz space, although under some extra conditions on the decay of drift $b(x)|_{x \in B_1(0)}=\frac{\nu_\star x}{|x|^2}$ at infinity. This required a rather careful work with weights \cite{KS_feller}.

4.~Also on $\mathbb R^d$, in the case $1<\alpha<2$, the test function $u \mapsto \sinh u$ can equally be handled using a Stroock-Varopoulos-type inequality \eqref{SV2}, albeit for $\nu \ll \nu_\star$. Namely:

\begin{enumerate}
\item[--]
 One has the following Stroock-Varopoulos-type inequality:
\begin{equation}
\label{SV2}
\langle (-\Delta)^{\frac{\alpha}{2}}u, \sinh u\rangle \geq 8 \left\langle |(-\Delta)^{\frac{\alpha}{4}} \sinh \frac{u}{2}|^2\right\rangle
\end{equation}
($u$ is assumed to be smooth and vanishing at infinity sufficiently rapidly). We comment on the proof below.

\item[--] Next, by the usual fractional Hardy inequality on $\mathbb R^d$,
\begin{align}
\left\langle |(-\Delta)^{\frac{\alpha}{4}} \sinh \frac{u}{2}|^2\right\rangle & \geq \kappa_{\frac{d-\alpha}{2}}\langle |\cdot|^{-\alpha} (\sinh \frac{u}{2})^2 \rangle \notag \\
& = \frac{1}{2}\kappa_{\frac{d-\alpha}{2}} \langle |\cdot|^{-\alpha} (\cosh u - 1)\rangle.
\label{H4}
\end{align}

\item[--] Consider drift $b(x)=\frac{4}{d-\alpha}\kappa_{\frac{d-\alpha}{2}}\frac{x}{|x|^\alpha} \eta(x)$, where $\eta=1$ in $B_1(0)$, $\eta$ has compact support so that we can freely integrate by parts. We evaluate:
\begin{align*}
|\langle b\cdot \nabla u,\sinh u\rangle | & = |\langle b, \nabla \cosh u\rangle| = |\langle {\rm div\,}b, \cosh u\rangle| \\
& \leq |\langle {\rm div\,}b, \cosh u-1\rangle| + |\langle {\rm div\,}b\rangle| \\
& \leq \langle 4\kappa_{\frac{d-\alpha}{2}}|\cdot|^{-\alpha}, \cosh u-1\rangle + c_0 \langle \cosh u - 1 \rangle + |\langle {\rm div\,}b\rangle|
\end{align*}
(now, instead of the volume of the torus the estimates will contain $|\langle {\rm div\,}b\rangle|$).

\item[--] Finally, using $\langle \lambda u, \sinh u\rangle \geq 2 \lambda \langle \cosh u - 1\rangle$, one can see that the previous estimates can be employed to analyze equation
$$
(\lambda  + (-\Delta)^{\frac{\alpha}{2}} + b \cdot \nabla)u=0 \quad \text{ on } \mathbb R^d,
$$
in the same way as we did it in the proof of Corollary \ref{cor1} (we need to replace $b$ by $b_\varepsilon$, but this is easy to do using Lemma \ref{approx_lem}, or rather its counterpart on $\mathbb R^d$).
\end{enumerate}

The strength of attraction $\nu$ that we can handle in this way is $\nu=\frac{4}{d-\alpha}\kappa_{\frac{d-\alpha}{2}}$. It is strictly less than $\nu_\star$, for all $1<\alpha<2$ (for $\alpha=2$ both coupling constants coincide), which defeats the purpose of dealing with the test function $u \mapsto e^u$ or $u \mapsto \sinh u$ -- one is better off using simply the $L^p$ Hardy inequality \eqref{H2} of \cite{BJLP}. Still, it is satisfying to know that the Hardy inequality of Theorem \ref{thm1} can also be proved, albeit with some essential losses in the condition on $\nu$, using classical means. This also suggests that Theorem \ref{thm1} should admit extension to $\mathbb R^d$.

The  inequality \eqref{SV2} is obtained by following closely the proof of a general Stroock-Varopoulos-type result, \cite[Theorem 2.2]{LS}, once one verifies the following three inequalities for all $t,s \geq 0$:
$$
c_\varphi^{-1}(t-s)(\varphi(t)-\varphi(s)) \geq (G_\varphi(t)-G_\varphi(s))^2,
$$
$$
c_\varphi^{-1}(t+s)(\varphi(t)+\varphi(s)) \geq (G_\varphi(t)+G_\varphi(s))^2,
$$
$$
c_\varphi^{-1}s \varphi(s) \geq G_\varphi^2(s).
$$
for
 $\varphi(u)=e^{u}-e^{-u}$, $G_\varphi(u)=e^{\frac{u}{2}}-e^{-\frac{u}{2}}$ and $c_\varphi=2$. 

Finally, let us add that the Stroock-Varopoulos-type inequality \eqref{SV2} can be used to include (here, for simplicity, at the a priori level, with compact support) a large class of drifts $q:\mathbb R^d \rightarrow \mathbb R^d$ with the divergence satisfying the form-boundedness condition
$$
\langle |{\rm div\,}q|,\varphi^2 \rangle \leq \left\langle |(-\Delta)^{\frac{\alpha}{4}} \varphi|^2\right\rangle
$$
which includes e.g.\,the Morrey class condition $$c_d\sup_{x \in \mathbb R^d, r>0} r^\alpha \biggl(\frac{1}{r^d}\langle \mathbf{1}_{B_r(x)}|{\rm div\,}q|^{1+\varepsilon}\rangle \biggr)^{\frac{1}{1+\varepsilon}} \leq 1,$$see Adams \cite{A}. In this way, we can combine Theorem \ref{thm1} and the argument above to extend Corollary \ref{cor1} to the drifts 
$$
(1-\epsilon)b + 4 \epsilon q, \quad \epsilon>0,
$$
i.e.\,perturb the Hardy drift $b(x)=\frac{\nu_\star x}{|x|^\alpha}$ by a general vector field.

5.~The following was proved in \cite{KSS} regarding the sub-critical regime $\nu<\nu_\star$. The operator $\Lambda_\nu=(-\Delta)^{\frac{\alpha}{2}} + \nu \frac{x}{|x|^\alpha} \cdot \nabla$ generates contraction $C_0$ semigroup $e^{-t\Lambda_\nu}$ in $L^p$ for $p<\infty$ defined by $\nu=\frac{p}{d-\alpha}\kappa_{\frac{d-\alpha}{p}}$ (for $\nu$ small we took $p=2$). This is a semigroup of integral operators whose integral kernel ($=:$ heat kernel) is singular and satisfies two-sided bounds
\begin{align*}
c_1 e^{-t(-\Delta)^{\frac{\alpha}{2}}}(x-y)( t^{-\frac{1}{\alpha}}|y| \wedge 1)^{-d+\gamma} & \leq e^{-t\Lambda_\nu}(x,y) \\
& \leq c_2e^{-t(-\Delta)^{\frac{\alpha}{2}}}(x-y)( t^{-\frac{1}{\alpha}}|y| \wedge 1)^{-d+\gamma}
\end{align*}
for all $t>0, x,y \in \mathbb R^d, y \neq 0$.
Exponent $\gamma \in ]\alpha,d]$ is defined as the unique solution to the equation
\begin{equation}
\label{beta_eq}
\frac{2^\alpha}{\gamma-\alpha}\frac{\Gamma(\frac{\gamma}{2})\Gamma(\frac{d}{2}-\frac{\gamma-\alpha}{2})}{\Gamma(\frac{d}{2}-\frac{\gamma}{2})\Gamma(\frac{\gamma-\alpha}{2})}=\nu.
\end{equation}
The latter entails that $x \mapsto |x|^{-d+\gamma}$ is a Lyapunov function for the formal adjoint operator $\Lambda_\nu^*=(-\Delta)^{\frac{\alpha}{2}}-\nabla \cdot \frac{\nu x}{|x|^\alpha}$, i.e.\,$$\Lambda_\nu^*|\cdot|^{-d+\gamma}=0.$$ 
The proof of the two-sided bounds is based on the $L^1(\mathbb R^d,\varphi_s(y) dy) \rightarrow L^\infty(\mathbb R^d,dx)$ ultracontractivity of $e^{t\Lambda_\nu}$, where $\varphi_s(y):=(s^{-\frac{1}{\alpha}}|y| \wedge 1)^{-d+\gamma}$. In turn, the proof of the weighted ultracontractivity requires the following ``desingularizing $L^1$ bound'': $$\|\varphi_s e^{-t\Lambda_\nu}\varphi_s^{-1}f\|_{1} \leq e^{c_0\frac{t}{s}}\|f\|_{1}, \quad 0<t \leq s, \quad c_0>0$$ 
(or rather its a priori variant for a regularization of the drift). One substantial difficulty here is the non-locality of the operator that makes algebraic manipulations with regularized $\Lambda_\nu$ and regularized $\varphi_s$ rather non-trivial; this was addressed by working with a regularized-drift dependent regularization of $\varphi_s$.
The desingularizing $L^1$ bound in its a priori form was also used in \cite{KSS} to construct the semigroup $e^{-t\Lambda_\nu}$. Let us also emphasize that a natural approach to the proof of heat kernel bounds when $\alpha=2$ via Dirichlet forms is not available for $\Lambda$ in the non-local case $1<\alpha<2$.

The constants $c_0$, $c_1$, $c_2$ depend on the Sobolev embedding carried out by $\Lambda$ and thus degenerate $c_1 \downarrow 0$, $c_0,c_2 \uparrow \infty$ as $\nu \uparrow \nu_\star$.

We note that equation \eqref{beta_eq} also provides an informal evidence that $\nu_\star$ is the maximal coupling constant for Kolmogorov operator \eqref{kolm}, \eqref{b_def}. Indeed, as $\nu \uparrow \nu_\star$, we have $\gamma=\gamma(\nu) \uparrow \alpha$, see Figure \ref{fig3}. For $\gamma>\alpha$ (i.e.\,intuitively, $\nu>\nu_\star$), equation \eqref{beta_eq} and thus the Lyapunov function $|\cdot|^{-d+\gamma}$ cease to be well defined. 

(In the context of this remark, let us also mention that another approach to proving heat kernel bounds based on the spherical harmonics-expansion is developed in recent paper \cite{BM}.)

\begin{figure}[ht]
\includegraphics[scale=0.5]{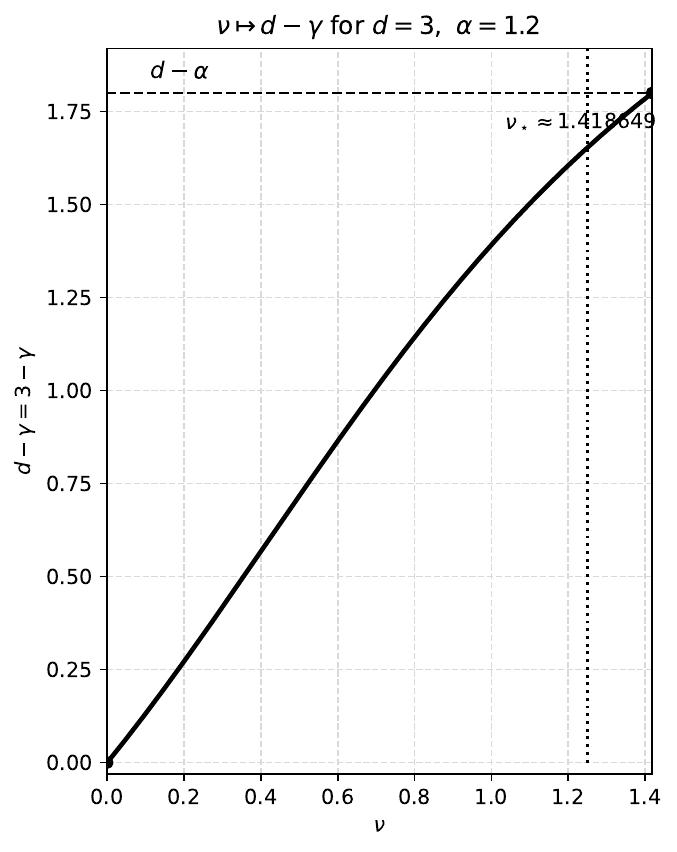}
\label{fig3}
\caption{{\small The dependence of the exponent in the Lyapunov function $|\cdot|^{-d+\gamma}$ on the coupling constant (strength of attraction by the drift) $\nu \in [0,\nu_\star[$. The vertical line represents the largest value of the coupling constant $$\nu_{{\rm SV}}=\lim_{p \uparrow \infty} \frac{p}{d-\alpha} \frac{4(p-1)}{p^2}\kappa_{\frac{d-\alpha}{2}}=\frac{2^{\alpha+2}}{d-\alpha}\left(\frac{\Gamma(\frac{d+\alpha}{4})}{\Gamma(\frac{d-\alpha}{4})}\right)^2$$ that can be attained using the Stroock-Varopoulos inequality \eqref{SV}. It is seen that nothing dramatic happens to the Lyapunov function as $\nu$ reaches and surpasses $\nu_{{\rm SV}}$.}}
\end{figure}

6.~The observation that working in $L^p$ with $p$ chosen sufficiently large allows to relax the condition on the coupling constant of the drift goes back to \cite{KS}. It was further exploited quite substantially in the context of Nash's method for $-\nabla \cdot a \cdot \nabla + q \cdot \nabla$ in \cite{S}.

\subsection{Further discussion}

(\textit{i}) The hyperbolic Orlicz space allows to handle functions with logarithmic singularities. This reminds somewhat the work that been done on local and non-local PDEs with divergence-free singular drifts arising in the study of Navier-Stokes equations, connected to the  ${\rm BMO}$ space, see \cite{CV, CW, KT}. The hyperbolic Orlicz space, unlike ${\rm BMO}$, does not take into account possible cancellations, and the blow-up effects that we encounter in connection with the attracting singularities of the drift have a different, more straightforward nature compared to the blow-ups arising in the study of solutions of the Navier-Stokes equations.

(\textit{ii}) There is a weak solution theory (weak in the probabilistic sense) of the stochastic differential equation (SDE) 
\begin{equation}
\label{sde}
X_t=x-\int_0^t b(X_s)ds + Z_t, \quad x \in \mathbb R^d,
\end{equation}
where $Z_t$ is the isotropic $\alpha$-stable process in $\mathbb R^d$ generated by $(-\Delta)^{\frac{\alpha}{2}}$,
with singular drift $b:\mathbb R^d \rightarrow \mathbb R^d$ in a large class that contains \eqref{kolm}. Namely, $|b| \in L^{1}_{\loc}$ with sufficiently small weak form-bound $\delta$,
\begin{equation}
\label{fbd}
\||b|^{\frac{1}{2}}(\lambda+(-\Delta)^{\frac{\alpha}{2}})^{-\frac{\alpha-1}{2\alpha}}\|_{2 \rightarrow 2} \leq \sqrt{\delta} \quad \text{ for some $\lambda>0$},
\end{equation}
which includes e.g. the case of sufficiently small Morrey norm
$\sup_{x \in \mathbb R^d, r>0} r\bigl(\frac{1}{r^d}\langle \mathbf{1}_{B_r(x)}|b|^{\frac{1+\varepsilon}{\alpha-1}}\rangle \bigr)^{\frac{1}{1+\varepsilon}},$
see \cite{KM, KY}.
Applied to $b(x)=\frac{\nu x}{|x|^\alpha}$, these results require smallness of $\nu$, so
``$\nu<\nu_\star$'' has not been reached yet, except in the case $\alpha=2$ where one can run De Giorgi's method in $L^p$ for $p$ large to prove weak existence for SDE \eqref{sde} and construct the corresponding Feller semigroup in $C_\infty$ for all $\nu<\nu_\star$ (in fact, more generally, for all $b$ satisfying $\|b(\lambda-\Delta)^{-\frac{1}{2}}\|_{2 \rightarrow 2} \leq \delta$ with $\delta$ going up to the critical value) \cite{KS_feller}.

(\textit{iii}) Although the theory of the Kolmogorov operator $\Lambda$ ceases to exist if the coupling constant $\nu$ surpasses $\nu_\star$, this does not mean that there is no theory of isotropic $\alpha$-stable process with drift $\frac{\nu x}{|x|^\alpha}$ . For instance, in the case $\alpha=2$, the authors of \cite{FJ} replace SDE \eqref{sde} with polar drift $b(x)=\nu x/|x|^2$ (the special structure of the drift is important) by the SDE for $|X_t|^2X_t$, which allows them to construct the ``two-particle'' process in the supercritical regime $\nu_\star<\nu~(<\nu_{\star\star})$ that still has physical meaning. In this regard, see also \cite{ORT}.

(\textit{iv}) Regarding the theory of operator $(-\Delta)^{\frac{\alpha}{2}} + \frac{\nu x}{|x|^\alpha} \cdot \nabla$ in regime $\nu<\nu_\star$, we also refer to recent results in \cite{BDM} on the equivalence of Sobolev norms determined by this operator and its fractional powers.

\bigskip

\section{Notations and auxiliary results used in the proofs}
\label{prelim_sect}

1.~In what follows,
\begin{equation}
\label{beta_def}
0 \leq \beta < d-\alpha.
\end{equation}

\medskip

2.~We identify functions on the torus $\Pi^d$ and the $4$-periodic functions on $\mathbb R^d$ -- the Euclidean space is tiled by the translations of the cube $Q=(-2,2]^d$. Having this identification in mind, we define on $\Pi^d$ the following positive function with singularity at the origin (and smooth everywhere else on the torus); it should be viewed as an analogue of function $x \mapsto |x|^{-\beta}$ on $\mathbb R^d$ modulo addition of a bounded function:
$$
\varphi_\beta(x)=\left\{
\begin{array}{ll}
\eta_\beta(t), & x \in B_\frac{3}{2}(0), \\
1, & x \in Q - B_{\frac{3}{2}}(0),
\end{array}
\right. \qquad t=|x|,
$$
where $$\eta_\beta(t):=t^{-\beta a(t)},$$
with a fixed function $a \in C^\infty(]0,2[)$ satisfying
$$
a(t)=\left\{
\begin{array}{ll}
1,& 0<t \leq 1, \\
0, & \frac{3}{2} \leq t <2.
\end{array}
\right.
$$
We obtain right away

\begin{lemma} The following scaling property is valid:
\begin{equation}
\label{scaling}
\varphi_\beta^{p}=\varphi_{p\beta}.
\end{equation}
\end{lemma}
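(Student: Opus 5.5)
The plan is to verify \eqref{scaling} pointwise on the cube $Q$, and hence on all of $\Pi^d$ by $4$-periodicity, splitting into the two regions used in the definition of $\varphi_\beta$. First, though, one has to make sense of $\varphi_{p\beta}$. The definition is formulated under \eqref{beta_def}, but the formula for $\varphi_\beta$ makes sense for any real $\beta\ge 0$. I will read $\varphi_{p\beta}$ as the function given by the same formula with $\beta$ replaced by $p\beta$, and with the same fixed cutoff function $a$. The constraint $\beta<d-\alpha$ plays no role in the identity itself; it only matters later, when one needs integrability of these functions.

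On $Q-B_{\frac32}(0)$ both sides are identically equal to $1$, since $1^p=1$. On $B_{\frac32}(0)\setminus\{0\}$, write $t=|x|\in\,]0,\tfrac32[$. Then
$$
\varphi_\beta(x)^p=\eta_\beta(t)^p=\bigl(t^{-\beta a(t)}\bigr)^p=t^{-p\beta a(t)}=\eta_{p\beta}(t)=\varphi_{p\beta}(x).
$$
This uses only that $t>0$, so the real power $t^{-\beta a(t)}$ is positive and $(t^s)^p=t^{sp}$ holds. The key structural point is that the exponent $\beta a(t)$ is linear in $\beta$, with a cutoff $a$ that does not depend on $\beta$. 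At the origin both sides equal $+\infty$, or are simply left undefined; this does not matter because the identity is an almost-everywhere identity of functions. For the same reason, the choice of value on the sphere $|x|=\tfrac32$ is irrelevant. There, in any case, $a=0$ near $t=\tfrac32$, so $\eta_\beta\equiv1$ on $[\tfrac32,2)$ and the two pieces of the definition glue smoothly.

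There is no real obstacle here. The only point needing care is the one just noted: the cutoff $a$ must be chosen independently of $\beta$, and the family $\varphi_\beta$ must be defined for all $\beta\ge0$, including exponents $p\beta$ that may exceed $d-\alpha$. Once this is made explicit, the statement follows immediately from the definition.
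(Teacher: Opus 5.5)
Your proof is correct and is exactly the direct check from the definition that the paper intends when it says the lemma is obtained ``right away'': both sides equal $1$ off $B_{\frac32}(0)$, and inside $(t^{-\beta a(t)})^p=t^{-p\beta a(t)}$ because the cutoff $a$ does not depend on $\beta$. Your remark about reading $\varphi_{p\beta}$ via the same formula even beyond the range $\beta<d-\alpha$ is a reasonable clarification. It is harmless in the paper's use of the lemma, since there the exponents are $p-1$ and $\frac{1}{p-1}$ applied with $\beta=\frac{d-\alpha}{p}$, which keeps all indices below $d-\alpha$.
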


Also, it is clear from the definition that there exists constant $c_0>0$, independent of $0 \leq \beta < d-\alpha$, such that
$$
\varphi_\beta \geq c_0 \quad \text{ on } \Pi^d.
$$

\medskip

3.~It will still be convenient to distinguish between $\varphi_\beta$ and its extension to $\mathbb R^d$ by periodicity, which we denote by $\tilde{\varphi}_\beta$.

\medskip

4.~Set $$
g_\beta:=\tilde{\varphi}_\beta-|\cdot|^{-\beta}.
$$ 
Let us prove some estimates involving $g_\beta$ that will be needed later. 

-- We have $g_\beta=0$ in $B_1(0)$ and, furthermore,
\begin{align}
\|g_\beta\|_{L^\infty(\frac{8}{7}Q)} & =|\tilde{\varphi}_\beta(y)-|y|^{-\beta}|\upharpoonright \text{$y$ is in the corners of $\frac{8}{7} Q$} \notag \\
& = 1-(\frac{16}{7}\sqrt{d})^{-\beta} = 1-e^{-\beta \log (\frac{16}{7}\sqrt{d})} \leq C \beta, \quad 0 \leq \beta < d-\alpha. \label{g_est1}
\end{align}

-- We have $\mathbb R^d=\cup_{m \in 4\mathbb Z^d}(Q+m)$. On $Q+m$ when $m \neq 0$,  $g_\beta$ can be singular, but we can still control its $L^1$ norm, i.e.\,for every $m \in 4\mathbb Z^d$, $m \neq 0$,
\begin{align*}
\|g_\beta\|_{L^1(Q+m)} \equiv \|\tilde{\varphi}_\beta-|\cdot|^{-\beta}\|_{L^1(Q+m)} & = \|\varphi_\beta-|\cdot + m|^{-\beta}\|_{L^1(Q)} \\
& \leq \|\varphi_\beta-1\|_{L^1(Q)} + \|1-|\cdot + m|^{-\beta}\|_{L^1(Q)},
\end{align*}
where, using the spherical coordinates, 
\begin{align*}
\|\varphi_\beta-1\|_{L^1(Q)} & =\|\varphi_\beta-1\|_{L^1(B_1(0))} + \|\varphi_\beta-1\|_{L^1(B_\frac{3}{2}(0)-B_1(0))}\\
& = \omega_d \frac{\beta}{d(d-\beta)} + \omega_d \int_1^\frac{3}{2} r^{d-1}(1-r^{-\beta a(r)})dr.
\end{align*}
In the second integral variable $r$ is bounded away from zero, so representing $r^{-\beta a(r)}=e^{-\beta a(r)\log r }$ and using the Taylor series expansion, we obtain that the second integral is bounded from above by $C\beta$, $0 \leq \beta < d-\alpha$. Thus,
$$
\|\varphi_\beta-1\|_{L^1(Q)} \leq 2C \beta.
$$
Similar analysis yields $\|1-|\cdot + m|^{-\beta}\|_{L^1(Q)} \leq C\beta$ for $C$ independent of $m \neq 0$. Therefore (re-denoting $C$),
\begin{equation}
\label{g_est2}
\|g_\beta\|_{L^1(Q+m)} \leq C \beta, \quad 0 \leq \beta < d-\alpha,
\end{equation}
for $C \neq C(m)$, for all $m \in 4\mathbb Z^d$, $m \neq 0$.

\medskip

5. Put $$A:=(-\Delta)_{\Pi^d}^{\frac{\alpha}{2}}.$$ Then, by definition,
$$
e^{-tA}\varphi_\beta=e^{-t(-\Delta)^{\frac{\alpha}{2}}_{\mathbb R^d}}\tilde{\varphi}_\beta.
$$

\medskip

6. We have, for all $x,y \in \Pi^d$,
$$
e^{-tA}(x-y)=e^{-t(-\Delta)^{\frac{\alpha}{2}}_{\mathbb R^d}}(x-y) + \sum_{m \in 4\mathbb Z^d, m \neq 0} e^{-t(-\Delta)^{\frac{\alpha}{2}}_{\mathbb R^d}}(x-y + m).
$$
So, the action of $e^{-tA}$ on a function on the torus reduces to integrating $f(\cdot)$ against $e^{-t(-\Delta)^{\frac{\alpha}{2}}_{\mathbb R^d}}(x-\cdot)$ plus the integrals of $f(\cdot)$ against the correction terms $\sum_{m \in 4\mathbb Z^d, m \neq 0} e^{-t(-\Delta)^{\frac{\alpha}{2}}_{\mathbb R^d}}(x- \cdot + m)$.

\medskip

7. In the same way, for all $x,y \in \Pi^d$, for $f$ sufficiently smooth,
$$
Af(x)=c_{d,\alpha}\,{\rm p.v.\,}\int_{\Pi^d}(f(x)-f(y))K(x-y) dy,
$$
where
$$
K_\alpha(x-y):=\frac{1}{|x-y|^{d+\alpha}} + \sum_{m \in 4\mathbb Z^d, m \neq 0}  \frac{1}{|x-y+m|^{d+\alpha}}.
$$
We obtain in the usual way the identity
\begin{equation}
\label{torus_frac_form}
\langle A^{\frac{1}{2}}f,A^{\frac{1}{2}}f\rangle = c_{d,\alpha}\int_{\Pi^d} \int_{\Pi^d} |f(x)-f(y)|^2 K_\alpha(x-y) dxdy.
\end{equation}

8.\,We write $L^p=L^p(\Pi^d)$. Let $\|\cdot\|_{p \rightarrow p}$ denote $L^p \rightarrow L^p$ operator norm.

\bigskip

\section{Proof of Theorem \ref{thm1}}

We start with proving a torus analogue of the Hardy inequality of \cite{BJLP}.

\begin{proposition} 
\label{hardy_schr}
There exists constant $c=c(d,\alpha)$ independent of $p$ such that, for every $1<p<\infty$ and all $0 \leq u \in C^\infty(\Pi^d)$,
$$
\kappa_{\frac{d-\alpha}{p}}\big\langle \varphi_{\alpha}, u^p \big\rangle \leq \big\langle Au,u^{p-1}\big\rangle + \frac{c}{p} \langle u^p\rangle.
$$
\end{proposition}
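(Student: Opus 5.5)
We follow the ground state substitution / Lyapunov function route of \cite{BJLP}, with the torus function $\varphi_\beta$ in place of $|x|^{-\beta}$. Set $\beta=\frac{d-\alpha}{p}$, so that $p\beta=d-\alpha$. Two ingredients will be needed.

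\textbf{(i) Approximate Lyapunov identity on the torus.} We aim to show
\[
A\varphi_\beta \;\ge\; \kappa_\beta\,\varphi_\alpha\varphi_\beta - C\beta \quad\text{on }\Pi^d\setminus\{0\},
\]
with $C=C(d,\alpha)$ independent of $\beta\in[0,d-\alpha)$. Write $\tilde\varphi_\beta=|\cdot|^{-\beta}+g_\beta$ and use items 5–7: $A\varphi_\beta=(-\Delta)^{\alpha/2}_{\mathbb R^d}|\cdot|^{-\beta}+(-\Delta)^{\alpha/2}_{\mathbb R^d}g_\beta$. The first term equals $\kappa_\beta|x|^{-\alpha-\beta}$, which is $\kappa_\beta\varphi_\alpha\varphi_\beta$ in $B_1(0)$ by the scaling lemma \eqref{scaling}. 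For the second term use the singular-integral formula. Near the origin $g_\beta=0$ in $B_1$; away from $\frac87Q$ its far copies are controlled by the $L^1$ bound \eqref{g_est2} against the kernel $|x-y|^{-d-\alpha}$, which is bounded and summable over $m\neq0$. Locally, the $L^\infty$ bound \eqref{g_est1} handles the large-scale part. The smooth part near $\partial B_1,\partial B_{3/2}$ has derivatives of size $O(\beta)$, because $t^{-\beta a(t)}-1=O(\beta)$ in $C^2$ away from $0$.

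Outside $B_1$, the term $\kappa_\beta\varphi_\alpha\varphi_\beta$ is bounded by $C\beta$ since $\kappa_\beta=O(\beta)$. So there we only need $A\varphi_\beta\ge -C\beta$, which follows from the same $O(\beta)$ estimates. This uniform-in-$\beta$ error bound is \emph{the main obstacle}. The key fact is that every deviation of $\varphi_\beta$ from $|x|^{-\beta}$, or from the constant $1$, is $O(\beta)$ in the relevant norms. Then the error term, multiplied by $p$, stays bounded because $\beta p=d-\alpha$.

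\textbf{(ii) Ground state inequality.} For $0\le u$ smooth, write $u=\varphi_\beta w$ and apply the pointwise convexity inequality of \cite{BJLP}: for $a,b\ge0$ and $s,t>0$,
\[
(a-b)\bigl(a^{p-1}-b^{p-1}\bigr)\;\ge\;\ldots
\]
Equivalently, use the identity valid for any positive $\varphi$ and the Markov form \eqref{torus_frac_form}:
\[
\langle Au,u^{p-1}\rangle-\Big\langle \frac{A\varphi_\beta}{\varphi_\beta},u^p\Big\rangle
= c_{d,\alpha}\iint \big[(u(x)-u(y))(u^{p-1}(x)-u^{p-1}(y))-\ldots\big]K_\alpha\ge0 .
\]
The key input is that $\varphi_\beta^p=\varphi_{d-\alpha}$ (scaling lemma), and that the choice $\beta=\frac{d-\alpha}{p}$ makes the remainder a nonnegative form. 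Concretely, \cite{BJLP} show, via the elementary inequality $(a-b)(a^{p-1}-b^{p-1})\ge$ (expression in $a/\varphi(x)$, $b/\varphi(y)$), that
\[
\langle Au,u^{p-1}\rangle\ge\Big\langle \frac{A\varphi_\beta}{\varphi_\beta},u^p\Big\rangle
\]
whenever $\varphi_\beta$ is the ground state for the $p$-adapted exponent. This argument uses only the symmetric jump kernel, so it transfers verbatim to $K_\alpha$ on the torus.

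\textbf{Conclusion.} Combining (i) and (ii), and using $\varphi_\beta\ge c_0$, gives
\[
\langle Au,u^{p-1}\rangle\ \ge\ \kappa_\beta\langle\varphi_\alpha,u^p\rangle-\frac{C\beta}{c_0}\langle u^p\rangle ,
\]
and $C\beta/c_0=\frac{C(d-\alpha)}{c_0p}=:\frac cp$. The technical care lies in (i): away from the singularity one must check the sign, and that the $p$-independent constant absorbs the periodic corrections, including summability over $m$ of $\int_{Q+m}|g_\beta|\,|x-y|^{-d-\alpha}$ using \eqref{g_est2}. If a pointwise lower bound in (i) fails at the transition region, I would instead prove it in integrated form against $u^p/\varphi_\beta$, which is all that (ii) requires.
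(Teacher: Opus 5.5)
Your part (i) is fine: it is the pointwise form of the paper's estimates for $S^1_t$, $S^2_t$, and rests on $\kappa_\beta\le C\beta$ and $|(-\Delta)^{\frac{\alpha}{2}}_{\mathbb R^d}g_\beta|\le C\beta$ on $Q$. Step (ii), however, fails as stated. The inequality $\langle Au,u^{p-1}\rangle\ge\langle \frac{A\varphi}{\varphi},u^p\rangle$ does not follow from the symmetric jump kernel when $p\neq 2$. If it did, taking $\varphi=|x|^{-\frac{d-\alpha}{2}}$ on $\mathbb R^d$ would give \eqref{H1} with the constant $\kappa_{\frac{d-\alpha}{2}}>\kappa_{\frac{d-\alpha}{p}}$, contradicting the optimality proved in \cite{BJLP}. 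It already fails on a two-point space: with $Af(1)=f(1)-f(2)$, $Af(2)=f(2)-f(1)$, $p=3$, $\varphi=(1,3)$ and $u=(1,2)$, the left side is $3$ and the right side is $\frac{10}{3}$. What the substitution $u=v\varphi_\beta$ and the nonnegativity of the Bregman divergence $F_p$ actually give is the two-weight bound
\[
\langle Au,u^{p-1}\rangle\ \ge\ \frac{p-1}{p}\Big\langle \frac{A\varphi_\beta}{\varphi_\beta},u^p\Big\rangle+\frac{1}{p}\Big\langle \frac{A\varphi_\beta^{p-1}}{\varphi_\beta^{p-1}},u^p\Big\rangle .
\]
This is the paper's decomposition $I_t+I_t^\ast+J_t$ with $J_t\ge 0$. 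On $\mathbb R^d$ the two weights collapse to $\kappa_\beta|x|^{-\alpha}$ only because $h^{p-1}=|x|^{-(d-\alpha-\beta)}$ is again an exact power and $\kappa_{d-\alpha-\beta}=\kappa_\beta$. On the torus $\varphi_\beta$ is not an exact ground state, so $A\varphi_\beta^{p-1}/\varphi_\beta^{p-1}\neq A\varphi_\beta/\varphi_\beta$, and the argument does not transfer verbatim.

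The repair uses only ingredients you already have, since you stated (i) for all exponents in $[0,d-\alpha)$. Use the scaling lemma in the form $\varphi_\beta^{p-1}=\varphi_{(p-1)\beta}$; the instance $\varphi_\beta^{p}=\varphi_{d-\alpha}$ you cite is not the relevant one. Then apply (i) a second time with exponent $(p-1)\beta=d-\alpha-\beta$, and use $\kappa_{(p-1)\beta}=\kappa_\beta$ for $\beta=\frac{d-\alpha}{p}$. The error in this second application is $C(p-1)\beta\approx C(d-\alpha)$, which is of order one, not $O(1/p)$. It becomes $O(1/p)$ only because of the prefactor $\frac1p$ in front of the second weight, so your accounting ``$C\beta$ times $p$ stays bounded'' covers only the first weight. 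With this correction the two weights sum to at least $\kappa_\beta\varphi_\alpha-\frac{C'}{p}$, using $\varphi_\gamma\ge c_0$, and the proposition follows. This is exactly the paper's proof: Step 1 handles $I_t$, Step 2 uses $I_t^\ast=I_t(v^{p/p'},\varphi_{(p-1)\beta},p')$, and Step 3 sets $\beta=\frac{d-\alpha}{p}$.
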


\begin{remarks}
1. It is easy to see that we need to divide $c$ by $p$, e.g.\,take $\alpha=2$ and integrate by parts.

2. The condition $u \geq 0$ is only for convenience: we will be applying the previous inequality to $u=1+\frac{v}{p}$ for $p \rightarrow \infty$. Here $v$ is in general sign-changing, but smooth, so $u$ is positive starting with some $p$.

3. In \cite{BJLP} the authors obtain the corresponding Hardy identity, i.e.\,there is an extra term that admits an explicit description. We follow their argument, so we can also obtain this extra term (from $J_t$, see below).
\end{remarks}

\begin{proof}
Since $u$ is bounded and the semigroup $e^{-tA}$ is strongly continuous on $L^p(\Pi^d)$, we have
\begin{equation}
\label{A_lim}
\big\langle Au,u^{p-1}\big\rangle=\lim_{t \downarrow 0} \big\langle \frac{u-e^{-tA}u}{t},u^{p-1}\big\rangle.
\end{equation}

\begin{remark}
Actually, in the end we will need to consider only two values of $\beta$, i.e. $\frac{d-\alpha}{p}$ and $(p-1)\frac{d-\alpha}{p}$.
\end{remark}

Write $u=v\varphi_\beta$ (then, clearly, $v \geq 0$ vanishes at the origin to order $\beta$) and
$$
\big\langle \frac{u-e^{-tA}u}{t},u^{p-1}\big\rangle=I_t+I_t^\ast + J_t,
$$
where
$$
I_t:=\frac{p-1}{p}\langle v \frac{\varphi_\beta-e^{-tA}\varphi_\beta}{t},(v\varphi_\beta)^{p-1}\rangle,
$$
$$
I_t^\ast:=\frac{1}{p}\langle v^{p-1} \frac{\varphi^{p-1}_\beta-e^{-tA}\varphi^{p-1}_\beta}{t},(v\varphi_\beta)\rangle,
$$
$$
J_t=\frac{1}{p}\langle v^{p-1}\frac{e^{-tA}\varphi_\beta^{p-1}}{t},v\varphi_\beta\rangle + \frac{p-1}{p}\langle v\frac{e^{-tA}\varphi_\beta}{t},(v\varphi_\beta)^{p-1}\rangle - \langle \frac{e^{-tA}(v\varphi_\beta)}{t},(v\varphi_\beta)^{p-1}\rangle.
$$
Then $J_t \geq 0$. In fact, 
$$
J_t=\frac{1}{p}\bigg\langle \bigg\langle F_p(v(x),v(y))\varphi_\beta^{p-1}(x)\varphi_\beta(y)\frac{e^{-tA}(x-y)}{t}\bigg\rangle_y \bigg\rangle_x,
$$
where $$F_p(a,b):=|b|^p-|a|^p-pa^{p-1}(b-a), \quad a,b \in \mathbb R, \quad a^{p-1}=a |a|^{p-2}$$
is the Bregman divergence. It is always non-negative. So, by \eqref{A_lim}, to prove the proposition we need to show that
\begin{equation}
\label{sought_est}
\limsup_{t \downarrow 0}(I_t + I_t^\ast) \geq \kappa_{\frac{d-\alpha}{p}}\big\langle \varphi_{\alpha}, (v\varphi_\beta)^p \big\rangle - \frac{c}{p} \langle (v\varphi_\beta)^p\rangle
\end{equation}
for some $c$ independent of $p$.

\textit{Step 1.~}Let us deal with $I_t$. We have, for every $x \in Q$,
\begin{align*}
\frac{\varphi_\beta(x)-(e^{-tA}\varphi_\beta)(x)}{t} &=\frac{\tilde{\varphi}_\beta(x)-(e^{-t(-\Delta)^{\frac{\alpha}{2}}_{\mathbb R^d}}\tilde{\varphi}_\beta)(x)}{t} \\
& (\text{write } e^{-t(-\Delta)^{\frac{\alpha}{2}}_{\mathbb R^d}}\tilde{\varphi}_\beta=e^{-t(-\Delta)^{\frac{\alpha}{2}}_{\mathbb R^d}}|\cdot|^{-\beta} + e^{-t(-\Delta)^{\frac{\alpha}{2}}_{\mathbb R^d}}(\tilde{\varphi}_\beta-|\cdot|^{-\beta})) \\
&=\frac{|x|^{-\beta}-(e^{-t(-\Delta)^{\frac{\alpha}{2}}_{\mathbb R^d}}|\cdot|^{-\beta})(x)}{t} + \frac{g_\beta(x) - (e^{-t(-\Delta)^{\frac{\alpha}{2}}}g_\beta)(x)}{t},
\end{align*}
where, recall, 
$$
g_\beta=\tilde{\varphi}_\beta-|\cdot|^{-\beta}.
$$ 
So, we can write
\begin{align*}
\frac{p}{p-1}I_t & = \left\langle \frac{u^p(x)}{\varphi_\beta(x)},\frac{|x|^{-\beta}-(e^{-t(-\Delta)^{\frac{\alpha}{2}}_{\mathbb R^d}}|\cdot|^{-\beta})(x)}{t}\right\rangle_{x \in Q}  + \left\langle \frac{u^p(x)}{\varphi_\beta(x)}, \frac{g_\beta(x) - (e^{-t(-\Delta)^{\frac{\alpha}{2}}}g_\beta)(x)}{t}\right\rangle_{x \in Q} \\
&=:S_t^{1} + S_t^2.
\end{align*}
We now proceed to estimating $S_t^1$ and $S_t^2$.

\medskip

(a) Regarding $S_t^1$, we have
\begin{align}
\lim_{t \downarrow 0}S_t^1 & =\kappa_\beta  \langle u^p(x) \frac{|x|^{-\beta-\alpha}}{\varphi_\beta(x)} \rangle_{x \in Q} \notag \\ 
& \geq \kappa_\beta \langle u^p\varphi_\alpha \rangle_{\Pi_d} - \kappa_\beta  c_1 \langle u^p \rangle_{\Pi^d}, \label{S1}
\end{align}
where constant $c_1=\max_{x \in Q}\bigl(\frac{|x|^{-\alpha-\beta}}{\varphi_\beta(x)}-\varphi_{\alpha}(x)\bigr)<\infty$.
The inequality is straightforward.
To show the claimed convergence, we note that the first factor in $S_t^1$ is bounded and vanishes to order $\beta$ at the origin, while the second factor contains integration over $\mathbb R^d$, so we can treat it using an argument from \cite{BJLP, BDK}. Namely, one has
\begin{equation}
\label{lyapunov_rep}
|x|^{-\beta}=\int_0^\infty f_\beta(s)e^{-s(-\Delta)_{\mathbb R^d}^{\frac{\alpha}{2}}}(x)ds, 
\end{equation}
$$
\kappa_\beta |x|^{-\beta-\alpha}=\int_0^\infty f_\beta'(s)e^{-s(-\Delta)_{\mathbb R^d}^{\frac{\alpha}{2}}}(x)ds,
$$
where 
$$
f_\beta(s)=c_0\left\{
\begin{array}{l}
s^{\frac{d-\alpha-\beta}{\alpha}}, \quad s>0, \\
0, \quad s \leq 0
\end{array}
\right.
$$
for appropriately chosen constant $c_0$. Then
$$
\frac{|x|^{-\beta}-(e^{-t(-\Delta)^{\frac{\alpha}{2}}_{\mathbb R^d}}|\cdot|^{-\beta})(x)}{t}=\int_0^\infty \frac{f(s)-f(s-t)}{t} e^{-s(-\Delta)_{\mathbb R^d}^{\frac{\alpha}{2}}}(x)ds.
$$
It remains to note that $0 \leq f(s)-f(s-t) \leq Ctf'(t)$, for $s,t>0$, so the convergence follows upon applying the Dominated Convergence Theorem.

\begin{remark}
If we were to apply the previous construction directly on the torus, we would need to introduce a decreasing exponential factor in \eqref{lyapunov_rep} to make the integral converge. This makes the argument considerably lengthier compared to $\mathbb R^d$, so we opt for the present proof.
\end{remark}

(b) In $S_t^2$, the function $g_\beta$, defined on $\mathbb R^d$, is smooth and vanishes in $B_1(0)$. Fix a bump function $0 \leq \zeta \leq 1$ such that $\sprt \zeta = 1$ on $\frac{15}{14}Q$, $\sprt \zeta \subset \frac{8}{7}Q$. We represent 
$$
g_\beta=\zeta g_{\beta}+(1-\zeta)g_\beta,
$$
where $\zeta g_{\beta}$ thus has compact support in $\frac{8}{7}Q - B_1(0)$ and $$(1-\zeta)g_\beta=0 \quad \text{ on $\frac{15}{14}Q$.}
$$ Then
$$
S_t^2=\left\langle \frac{u^p(x)}{\varphi_\beta(x)},\frac{(\zeta g_{\beta})(x)-(e^{-t(-\Delta)^{\frac{\alpha}{2}}_{\mathbb R^d}}\zeta g_{\beta})(x)}{t}\right\rangle_{x \in Q}  + \left\langle \frac{u^p(x)}{\varphi_\beta(x)},-\frac{(e^{-t(-\Delta)^{\frac{\alpha}{2}}_{\mathbb R^d}}(1-\zeta) g_{\beta})(x)}{t}\right\rangle_{x \in Q}
$$
In the first term, the first factor is a continuous function while $\zeta g_\beta$ is a smooth function having compact support, so we have
\begin{align*}
\left\langle \frac{u^p(x)}{\varphi_\beta(x)},\frac{(\zeta g_{\beta})(x)-(e^{-t(-\Delta)^{\frac{\alpha}{2}}_{\mathbb R^d}}\zeta g_{\beta})(x)}{t}\right\rangle_{x \in Q} \underset{t \downarrow 0}{\rightarrow } & \left\langle \frac{u^p(x)}{\varphi_\beta(x)} (-\Delta)^{\frac{\alpha}{2}}_{\mathbb R^d} (\zeta g_\beta)(x)\right\rangle_{x \in Q} \\
& \geq - c_0^{-1}c_2 \beta \langle u^p\rangle_{\Pi^d}
\end{align*}
using  $\varphi_\beta \geq c_0$ and
$$
\|(-\Delta)^{\frac{\alpha}{2}}_{\mathbb R^d} (\zeta g_\beta)\|_{L^\infty(\mathbb R^d)} \leq c_2\beta.
$$
To see the latter it suffice to show that $\|\nabla_i \nabla_j\zeta g_\beta\|_{L^\infty(\mathbb R^d)} \leq c_2'\beta$
for some $c_2'$ independent of $\beta$, and then use the principal value representation for the fractional Laplacian and apply the Mean-value Theorem. The estimate on the second derivatives of $\zeta g_\beta$ follows from $\sprt \zeta g_\beta \subset \frac{8}{7}Q - B_1(0)$, i.e.\,are away from the singularities of $g_\beta$, using \eqref{g_est1}.

The second term is the above representation for $S_t^2$ is handled as follows. By $\frac{e^{-t(-\Delta)^{\frac{\alpha}{2}}_{\mathbb R^d}}(x-y)}{t} \leq C |x-y|^{-d-\alpha}$ (from the standard upper bound of the heat kernel of the fractional Laplacian),
\begin{align*}
\left\langle \frac{u^p(x)}{\varphi_\beta(x)},-\frac{(e^{-t(-\Delta)^{\frac{\alpha}{2}}_{\mathbb R^d}}(1-\zeta) g_{\beta})(x)}{t}\right\rangle_{x \in Q} & \geq -C \left\langle \frac{u^p(x)}{\varphi_\beta(x)}, \big\langle |x-y|^{-d-\alpha}(1-\zeta(y)) |g_{\beta}(y)|\big\rangle_{y \in \mathbb R^d}\right\rangle_{x \in Q} \\
& (\text{use } \varphi_\beta \geq c_0 \text{ and } (1-\zeta)=0 \text{ on } \small{\frac{15}{14}}Q) \\
& \geq -C c_0^{-1}\left\langle u^p(x), \big\langle |x-y|^{-d-\alpha}|g_{\beta}(y)|\big\rangle_{y \in \mathbb R^d  \setminus \frac{15}{14}Q}\right\rangle_{x \in Q} \\
& \text{(note that $x$ and $y$ are uniformly bounded away:} \\
& \text{this is the reason we remove $\frac{15}{14}Q$, not just $Q$)} \\
& \geq -C_2 \sum_{m \in 4\mathbb Z^d, m \neq 0}\big\langle |y|^{-d-\alpha}|g_{\beta}(y)|\big\rangle_{y \in Q+m}\left\langle u^p(x) \right\rangle_{x \in Q} \\
& (\text{apply \eqref{g_est2}}) \\
& \geq -  \sum_{m \in 4\mathbb Z^d, m \neq 0} |m|^{-d-\alpha} C\beta \left\langle u^p(x) \right\rangle_{x \in Q} = - c_3 \beta \langle u^p \rangle_{\Pi^d}
\end{align*}
where $c_3<\infty$ is independent of $\beta$.

Thus,
\begin{equation}
\label{S2}
\lim_{t \downarrow 0}S_t^2 \geq -(c_2+c_3)\beta \langle u^p \rangle.
\end{equation}

Combining \eqref{S1} and \eqref{S2}, we arrive at
\begin{equation}
\label{step1}
\lim_{t \downarrow 0}I_t \geq \frac{p-1}{p}\kappa_{\beta}  \langle  \varphi_\alpha, u^p \rangle_{\Pi_d} - \frac{p-1}{p}(\kappa_\beta c_1 + \beta c_2 + \beta c_3) \langle u^p \rangle_{\Pi^d}.
\end{equation}

\medskip

\textit{Step 2.~}Denote $I_t=I_t(v,\varphi_\beta,p)$. Then we can represent, using, crucially, the scaling property \eqref{scaling}, 
$$I_t^\ast=I_t(v^{\frac{p}{p'}},\varphi_{(p-1)\beta},p').$$
Therefore, applying \eqref{step1} to $I_t(v^{\frac{p}{p'}},\varphi_{(p-1)\beta},p')$ and noting that we have $(v^{\frac{p}{p'}}\varphi_{(p-1)\beta})^{p'}=(v\varphi_\beta)^p=u^p$ and $\frac{p'-1}{p'}=\frac{1}{p}$, we obtain
$$
\lim_{t \downarrow 0}I^\ast_t \geq \frac{1}{p}\kappa_{(p-1)\beta}  \langle  \varphi_\alpha, u^p \rangle_{\Pi_d} - \frac{1}{p}c \langle u^p \rangle_{\Pi^d}.
$$

\textit{Step 3}.~Take $\beta=\frac{d-\alpha}{p}$. Then $\kappa_\beta=\kappa_{(p-1)\beta}$. Then, adding up the last inequality with \eqref{step1}, we arrive at the sought estimate \eqref{sought_est} (modulo re-denoting $2c$ by $c$).
\end{proof}

It will be convenient to introduce coefficient-less drift
\begin{equation}
\label{drift2}
q(x)=\frac{x}{|x|^{\alpha}} \quad \text{ if } |x|\leq 1 \quad \text{extended to the torus as \eqref{b_def}},
\end{equation}
in which case the drift that interests us, i.e.\,$b$ in \eqref{b_def}, \eqref{nu_star}, is given by $b=\nu_\star q$.

Fix some $1<p<\infty$. For the Kolmogorov operator 
\begin{equation*}
(-\Delta)^{\frac{\alpha}{2}} + \frac{p}{d-\alpha}\kappa_{\frac{d-\alpha}{p}}\, q \cdot \nabla,
\end{equation*}
(in view of \eqref{max_conv}, $\frac{p}{d-\alpha}\kappa_{\frac{d-\alpha}{p}} q \rightarrow b$ as $p \uparrow \infty$),
we have the following intrinsic $L^p$ Hardy inequality:

\begin{proposition} 
\label{hardy_kolm}
There exists constant $c=c(d,\alpha)$ independent of $p$ such that, for every $1<p<\infty$ and all $0 \leq u \in C^\infty(\Pi^d)$,
$$
\frac{p}{d-\alpha}\kappa_{\frac{d-\alpha}{p}}|\big\langle q \cdot \nabla u, u^{p-1} \big\rangle| \leq \big\langle Au,u^{p-1}\big\rangle + \frac{c}{p} \langle u^p\rangle. 
$$
\end{proposition}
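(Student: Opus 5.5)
Integrating by parts in the drift term and applying Proposition \ref{hardy_schr} gives the statement directly. For $0 \le u \in C^\infty(\Pi^d)$ we have $u^{p-1}\nabla u = \frac{1}{p}\nabla u^p$. Hence
$$
\langle q\cdot\nabla u, u^{p-1}\rangle = \frac{1}{p}\langle q, \nabla u^p\rangle = -\frac{1}{p}\langle {\rm div\,} q, u^p\rangle .
$$
Integration by parts is legitimate because $|q| \lesssim |x|^{1-\alpha}$ with $\alpha<2$ is integrable near the origin. The boundary term over a small sphere $\partial B_r(0)$ is $O(r^{d-\alpha})$, which tends to $0$ since $d\ge 2>\alpha$. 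There is no boundary at infinity on the torus.

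Next we compute the divergence. In $B_1(0)$,
$$
{\rm div\,} q = (d-\alpha)|x|^{-\alpha} = (d-\alpha)\varphi_\alpha,
$$
since $\varphi_\alpha = |x|^{-\alpha}$ there. Outside $B_1(0)$, $q$ is smooth and $\varphi_\alpha$ is bounded, so
$$
\|{\rm div\,} q - (d-\alpha)\varphi_\alpha\|_{L^\infty(\Pi^d)} \le c_4 ,
$$
with $c_4 = c_4(d,\alpha)$ depending only on the fixed extension of $q$. Consequently
$$
\frac{p}{d-\alpha}\kappa_{\frac{d-\alpha}{p}}\,\bigl|\langle q\cdot\nabla u,u^{p-1}\rangle\bigr| = \frac{1}{d-\alpha}\kappa_{\frac{d-\alpha}{p}}\,\bigl|\langle {\rm div\,} q, u^p\rangle\bigr| \le \kappa_{\frac{d-\alpha}{p}}\langle \varphi_\alpha, u^p\rangle + \frac{c_4}{d-\alpha}\,\kappa_{\frac{d-\alpha}{p}}\langle u^p\rangle .
$$
Here we used $\varphi_\alpha>0$ and $u\ge0$, which allows the absolute value to be split in this way.

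The first term is controlled by Proposition \ref{hardy_schr}:
$$
\kappa_{\frac{d-\alpha}{p}}\langle\varphi_\alpha,u^p\rangle \le \langle Au,u^{p-1}\rangle + \frac{c}{p}\langle u^p\rangle .
$$
The main obstacle is the remaining error term $\frac{c_4}{d-\alpha}\kappa_{\frac{d-\alpha}{p}}\langle u^p\rangle$, which must also be of size $O(1/p)$ uniformly in $p$. The key point is the asymptotics of $\kappa_\beta$ as $\beta\downarrow0$. The factor $\Gamma(\frac{\beta}{2})^{-1}$ vanishes like $\beta/2$, while the other Gamma factors stay bounded for $\beta\in[0,d-\alpha)$. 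Hence $\kappa_\beta \le C\beta$ for small $\beta$, which is consistent with \eqref{max_conv}.

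For $p$ large this gives $\kappa_{\frac{d-\alpha}{p}} \le C\frac{d-\alpha}{p}$. For $p$ in a bounded range, $\kappa_{\frac{d-\alpha}{p}} \le \kappa_{\frac{d-\alpha}{2}}$ by \eqref{kappa}, so $\kappa_{\frac{d-\alpha}{p}} \le C'/p$ there as well. Altogether $\kappa_{\frac{d-\alpha}{p}} \le C''/p$ for all $1<p<\infty$. The error term is therefore at most $\frac{c'}{p}\langle u^p\rangle$, and adding it to the constant from Proposition \ref{hardy_schr} (re-denoting the sum by $c$) proves Proposition \ref{hardy_kolm}.
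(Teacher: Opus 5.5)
Your proof is correct and follows the paper's argument: you integrate by parts to get $-\frac{1}{p}\langle {\rm div\,}q,u^p\rangle$, bound $|{\rm div\,}q|$ by $(d-\alpha)\varphi_\alpha$ plus a constant, apply Proposition \ref{hardy_schr}, and absorb the leftover term using $\kappa_{\frac{d-\alpha}{p}}=O(1/p)$. Your justification of the integration by parts near the singularity, and of the bound $\kappa_\beta\le C\beta$ via $1/\Gamma(\frac{\beta}{2})\sim\frac{\beta}{2}$, fills in details that the paper only asserts.
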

\begin{proof}
We integrate by parts in the drift term:
\begin{align*}
\big\langle q \cdot \nabla u, u^{p-1} \big\rangle = -\frac{1}{p}\big\langle {\rm div\,}q,u^p\big\rangle.
\end{align*}
In turn, by \eqref{drift2},
$$
{\rm div\,}q=(d-\alpha)|x|^{-\alpha} \quad \text{ if } x \in B_1(0), 
$$
and ${\rm div\,}q$ is smooth on $\Pi^d - B_1(0)$. Hence $${\rm div\,}q \leq (d-\alpha)\varphi_\alpha + c_0$$ for a  constant $c_0=c_0(d,\alpha)$. Therefore
\begin{align*}
|\big\langle q \cdot \nabla u, u^{p-1} \big\rangle| &\leq \frac{d-\alpha}{p} \big\langle \varphi_\alpha, u^p \big\rangle + \frac{d-\alpha}{p}c_0\langle u^p\rangle \\
& \text{(we apply Proposition \ref{hardy_schr})} \\
& \leq \frac{d-\alpha}{p}\kappa_{\frac{d-\alpha}{p}}^{-1}\bigg(\langle Au,u^{p-1}\rangle + \frac{c}{p} \langle u^p\rangle\bigg) + \frac{d-\alpha}{p}c_0 \langle u^p\rangle,
\end{align*}
which yields
$$
\frac{p}{d-\alpha}\kappa_{\frac{d-\alpha}{p}}|\big\langle q \cdot \nabla u, u^{p-1} \big\rangle| \leq \langle Au,u^{p-1}\rangle + \biggl( \frac{c}{p}+ c_0\kappa_{\frac{d-\alpha}{p}} \biggr)\langle u^p\rangle.
$$
It remains to take into account that $\kappa_{\frac{d-\alpha}{p}}=O(1/p)$.
\end{proof}

\begin{corollary}
\label{cor_kolm}
There exists constant $c=c(d,\alpha)$ independent of $p$ such that, for every $1<p<\infty$ and all $0 \leq u \in C^\infty(\Pi^d)$,
$$
\frac{p}{d-\alpha}\kappa_{\frac{d-\alpha}{p}}\big|\bigg\langle q \cdot \nabla u, \bigg(1+\frac{u}{p}\bigg)^{p-1} \bigg\rangle\big| \leq \bigg\langle Au,\bigg(1+\frac{u}{p}\bigg)^{p-1}\bigg\rangle + c \big\langle \bigg(1+\frac{u}{p}\bigg)^p\big\rangle. 
$$
\end{corollary}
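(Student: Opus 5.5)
We apply Proposition \ref{hardy_kolm} to the function $w:=1+\frac{u}{p}$. Since $u \geq 0$ is smooth, $w \geq 1$ is a positive smooth function on $\Pi^d$, so it is admissible there.

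The key identities are $\nabla w=\frac{1}{p}\nabla u$ and $Aw=\frac{1}{p}Au$. The second holds because $A=(-\Delta)^{\frac{\alpha}{2}}_{\Pi^d}$ annihilates constants; this is immediate from the principal value representation in item 7 of Section \ref{prelim_sect}, or from $e^{-tA}1=1$. Substituting into Proposition \ref{hardy_kolm} gives
$$
\frac{p}{d-\alpha}\kappa_{\frac{d-\alpha}{p}}\frac{1}{p}\big|\big\langle q\cdot\nabla u, w^{p-1}\big\rangle\big| \leq \frac{1}{p}\big\langle Au, w^{p-1}\big\rangle + \frac{c}{p}\langle w^p\rangle .
$$
Multiplying by $p$ yields exactly the claimed inequality, with the same constant $c=c(d,\alpha)$. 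This constant is independent of $p$ precisely because Proposition \ref{hardy_kolm} carries the factor $\frac{c}{p}$ in front of $\langle w^p\rangle$. That factor of $\frac1p$ is what makes the blow-up substitution $u\mapsto 1+\frac{u}{p}$ work: it cancels against the multiplication by $p$.

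The only points needing a word of care are the following.

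\begin{enumerate}
\item[(i)] Proposition \ref{hardy_kolm} was stated for $0\le u\in C^\infty(\Pi^d)$, and $w$ satisfies this.
\item[(ii)] The powers $w^{p-1}$ and $w^p$ coincide with the signed powers $w|w|^{p-2}$, since $w>0$.
\end{enumerate}

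In fact, positivity of $w$ is all that is used. So the same argument works for sign-changing smooth $u$ as soon as $p>\|u\|_\infty$, as anticipated in Remark 2 after Proposition \ref{hardy_schr}. I would note this, since it is what will be needed when passing to $p\uparrow\infty$ for Theorem \ref{thm1}.

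I do not expect a genuine obstacle here. The substantive work, namely the $p$-uniformity of the error constant, was already done in Propositions \ref{hardy_schr} and \ref{hardy_kolm}. The step to double-check is that $A$ kills constants on the torus, so that no additional $p$-dependent term appears.
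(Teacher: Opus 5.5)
Your proposal is correct and follows the paper's argument exactly: apply Proposition \ref{hardy_kolm} to $1+\frac{u}{p}$, use $\nabla(1+\frac{u}{p})=\frac{1}{p}\nabla u$ and $A(1+\frac{u}{p})=\frac{1}{p}Au$, and multiply by $p$. Your two identities are the right ones; the paper's proof misprints them as $\frac{u}{p}$. Your remark that only positivity of $1+\frac{u}{p}$ is used, so sign-changing smooth $u$ is allowed once $p>\|u\|_\infty$, matches the paper's Remark 2, and this is what is needed when letting $p\uparrow\infty$.
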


\begin{proof}
Apply Proposition \ref{hardy_kolm} to $v:=1+\frac{u}{p}$:
$$
\frac{p}{d-\alpha}\kappa_{\frac{d-\alpha}{p}}|\big\langle q \cdot \nabla v, v^{p-1} \big\rangle| \leq \big\langle Av,v^{p-1}\big\rangle + \frac{c}{p} \langle v^p\rangle. 
$$
We have $\nabla v=\frac{u}{p}$, $Av=\frac{u}{p}$. So, it remains to multiply by $p$.
\end{proof}

Proposition \ref{hardy_kolm} and Corollary \ref{cor_kolm} admit a priori variants, i.e.\,for a smooth a.e.\,approximation $\{q_\varepsilon\}$ of $q$ as long as it satisfies
\begin{equation}
\label{approx_dom}
{\rm div\,}q_\varepsilon \leq {\rm div\,}q + C
\end{equation}
for some constant $\sup_{\varepsilon>0}C<\infty$. Then we can repeat the proof of Proposition \ref{hardy_kolm} using $\kappa_{\frac{d-\alpha}{p}}=O(1/p)$).

\begin{lemma}
\label{approx_lem}
One possible choice of $\{q_\varepsilon\}$ is as follows:
$$
q_\varepsilon:=E_{\varepsilon} q,
$$
where $E_{\varepsilon}=e^{-\varepsilon(-\Delta)_{\Pi^d}^{\frac{\alpha_1}{2}}}$ is the De Giorgi mollifier with $0<\alpha_1 \leq 2$ fixed by $0 \leq \alpha \leq d-\alpha_1$.
\end{lemma}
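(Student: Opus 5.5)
To verify Lemma \ref{approx_lem}, we have to check that $q_\varepsilon=E_\varepsilon q$ is a smooth a.e.\ approximation of $q$ and that it satisfies \eqref{approx_dom}, i.e.\ ${\rm div\,}q_\varepsilon \leq {\rm div\,}q + C$ with $C$ independent of $\varepsilon$. Smoothness and a.e.\ convergence are standard. Since $|q|\leq |x|^{1-\alpha}\in L^1(\Pi^d)$ and $\alpha<2\le d$, $q$ is integrable. The semigroup $E_\varepsilon$ has a smooth, positive, mass-one kernel on the torus, so $q_\varepsilon \in C^\infty$ and $q_\varepsilon \to q$ in $L^1$. Pointwise convergence off the origin follows because $q$ is continuous there.

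The main step is the divergence bound. Since $E_\varepsilon$ commutes with derivatives, ${\rm div\,}q_\varepsilon = E_\varepsilon\, {\rm div\,}q$ in the distributional sense. There is no singular part at the origin: the flux of $q$ through small spheres is $\sim r^{d-\alpha}\to 0$, so ${\rm div\,}q$ is the $L^1$ function $(d-\alpha)|x|^{-\alpha}$ in $B_1(0)$ plus a bounded smooth function. Split ${\rm div\,}q = (d-\alpha)\tilde\varphi + h$, where $\tilde\varphi$ equals $|x|^{-\alpha}$ near the origin and $h$ is bounded. Because $E_\varepsilon$ is positivity preserving and sub-Markovian, $E_\varepsilon h \le \|h\|_\infty$.

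It then remains to show the essential point
$$E_\varepsilon \psi \le \psi + C, \qquad \psi:=\varphi_\alpha,$$
that is, that $\varphi_\alpha$ is, up to a bounded error, a supersolution for the heat semigroup of $(-\Delta)^{\frac{\alpha_1}{2}}$. To get this, write $\psi=|x|^{-\alpha}+g_\alpha$ near the origin, lifted to $\mathbb R^d$ as in item 6 of Section \ref{prelim_sect}. On $\mathbb R^d$, the condition $\alpha\le d-\alpha_1$ (so that $\beta=\alpha$ lies in the admissible range $\beta+\alpha_1\le d$) gives
$$(-\Delta)^{\frac{\alpha_1}{2}}|x|^{-\alpha}=\kappa^{(\alpha_1)}_\alpha |x|^{-\alpha-\alpha_1}\ge 0,$$
with the constant $\kappa$ computed with $\alpha_1$ in place of $\alpha$. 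This constant is nonnegative; at the endpoint $\alpha=d-\alpha_1$ it vanishes, or $|x|^{-\alpha}$ is a multiple of the fundamental solution, which is still excessive. Hence $|x|^{-\alpha}$ is excessive, and $e^{-\varepsilon(-\Delta)^{\alpha_1/2}}|x|^{-\alpha}\le |x|^{-\alpha}$, for instance via the subordination representation \eqref{lyapunov_rep} with $\alpha_1$.

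The correction $g_\alpha$ and the periodization terms are then treated as in Step 1(b) of the proof of Proposition \ref{hardy_schr}, using \eqref{g_est1} and \eqref{g_est2}. Near the origin these contributions are bounded by a constant. The remaining piece of $\psi$ is the difference between the truncated power and the power; it is bounded on the cube, and the tails of the periodized power over the translates $Q+m$ have summable $L^1$ norms, so the heat kernel applied to them gives bounded quantities uniformly in $\varepsilon\le 1$. For $\varepsilon\ge1$ the bound is trivial, since $E_\varepsilon \psi\le C\|\psi\|_1$.

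I expect the main obstacle to be exactly this comparison $E_\varepsilon\varphi_\alpha\le\varphi_\alpha+C$, uniformly in $\varepsilon$, on the torus. It requires positivity of the Euclidean Lyapunov constant, which is where the restriction $\alpha\le d-\alpha_1$ enters, together with careful control of the periodization and cut-off errors. Once it is established, \eqref{approx_dom} follows, and Proposition \ref{hardy_kolm} and Corollary \ref{cor_kolm} hold for $q_\varepsilon$.
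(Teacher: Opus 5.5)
Your proposal is correct and follows essentially the same route as the paper. Both arguments reduce ${\rm div\,}q_\varepsilon=E_\varepsilon{\rm div\,}q$ to the bound $E_\varepsilon\varphi_\alpha\le\varphi_\alpha+C$ and lift it to $\mathbb R^d$. They then use the super-median property of $|x|^{-\alpha}$ for the $\alpha_1$-stable semigroup, which is where $\alpha\le d-\alpha_1$ enters, and control the periodization and cut-off errors by the heat kernel bound $C\varepsilon|z|^{-d-\alpha_1}$ at separated points.

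Your extra points are valid refinements of details the paper passes over: ${\rm div\,}q$ has no delta mass at the origin, the endpoint $\alpha=d-\alpha_1$ is covered, and $\varepsilon\ge 1$ is handled separately, where the paper's constant $C_2\varepsilon+c_2$ is not uniform.
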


The proof below can be repeated for the norm of $q_\varepsilon$ itself, to yield, in addition to \eqref{approx_dom},
\begin{equation} 
\label{approx_dom2}
|q_\varepsilon| \leq |q|+C
\end{equation}
(we will use this later).

\begin{proof}
The convergence follows from the properties of mollifiers, while the domination property \eqref{approx_dom} follows, once we fix $\alpha_1$ sufficiently close to $0$, from the following super-median property of the $\alpha$-stable semigroup on $\mathbb R^d$:
\begin{equation}
\label{super_median}
(e^{-(-\Delta)^{\frac{\alpha_1}{2}}_{\mathbb R^d}}|\cdot|^{-\beta}\big)(x) \leq |x|^{-\beta}, \quad x \in \mathbb R^d, \quad 0 \leq \beta < d-\alpha_1,
\end{equation}
see \cite{BDK} for details.
Indeed, on the torus $\Pi^d$ we have
\begin{align*}
{\rm div\,}q_\varepsilon=E_\varepsilon {\rm div\,}q=(d-\alpha) E_\varepsilon \varphi_\alpha + c_0,
\end{align*}
for a universal constant $c_0$,
where, in turn, for every $x \in Q$,
\begin{align}
E_\varepsilon \varphi_\alpha(x) & \equiv e^{-\varepsilon(-\Delta)^{\frac{\alpha_1}{2}}_{\Pi^d}}\varphi_\alpha(x)  =e^{-\varepsilon(-\Delta)^{\frac{\alpha_1}{2}}_{\mathbb R^d}}\tilde{\varphi}_\alpha(x) \notag \\
& = \langle e^{-\varepsilon(-\Delta)^{\frac{\alpha_1}{2}}_{\mathbb R^d}}(x-y)\varphi_\alpha(y)\rangle_{y \in 3Q} + \sum_{\substack{m=(m_i)_{i=1}^d \in 4\mathbb Z^d, \\ \text{at least one $|m_i| \geq 2$}}} \langle e^{-\varepsilon(-\Delta)^{\frac{\alpha_1}{2}}_{\mathbb R^d}}(x-y-m)\varphi_\alpha(y)\rangle_{y \in Q}. \label{moll_est}
\end{align}
The first term in \eqref{moll_est}:
\begin{align*}
\langle e^{-\varepsilon(-\Delta)^{\frac{\alpha_1}{2}}_{\mathbb R^d}}(x-y)\varphi_\alpha(y)\rangle_{y \in 3Q} & \leq \langle e^{-\varepsilon(-\Delta)^{\frac{\alpha_1}{2}}_{\mathbb R^d}}(x-y)|y|^{-\alpha}\rangle_{y \in Q} + c_1 \\
& < \langle e^{-\varepsilon(-\Delta)^{\frac{\alpha_1}{2}}_{\mathbb R^d}}(x-y)|y|^{-\alpha}\rangle_{y \in \mathbb R^d} + c_1 \\
& (\text{we apply \eqref{super_median} with $\beta=\alpha$}) \\
& \leq |x|^{-\alpha} \quad \text{provided that $\alpha_1$ is fixed by $0 \leq \alpha \leq d-\alpha_1$} \\
& \leq \varphi_\alpha(x) + c_2.
\end{align*}
(Note that if $d \geq 4$, then we can take $\alpha_1=\alpha$ or even $\alpha_1=2$.)

The second term in  \eqref{moll_est} contains
\begin{align*}
&\langle e^{-\varepsilon(-\Delta)^{\frac{\alpha_1}{2}}_{\mathbb R^d}}(x-y-m)\varphi_\alpha(y)\rangle_{y \in Q} \\
& (\text{apply the standard upper bound on the heat kernel of the fractional Laplacina}) \\
& \leq \varepsilon \langle |x-y-m|^{-d-\alpha_1}\varphi_\alpha(y)\rangle_{y \in Q}.
\end{align*}
The integral kernel is bounded since $x$ and $y$ are separated.
Hence, uniformly in $x \in Q$,
\begin{align*}
\sum_{\substack{m=(m_i)_{i=1}^d \in 4\mathbb Z^d, \\ \text{at least one $|m_i| \geq 2$}}} \langle e^{-\varepsilon(-\Delta)^{\frac{\alpha_1}{2}}_{\mathbb R^d}}(x-y-m)\varphi_\alpha(y)\rangle_{y \in Q} & \leq C_1 \varepsilon  \sum_{\substack{m=(m_i)_{i=1}^d \in 4\mathbb Z^d, \\ \text{at least one $|m_i| \geq 2$}}} m^{-d-\alpha_1}\\
&=:C_2 \varepsilon, \quad C_2<\infty.
\end{align*}

Therefore, we obtain from \eqref{moll_est}:
$$
E_\varepsilon \varphi_\alpha(x) \leq \varphi_\alpha(x)+C_3, \quad C_3=C_2\varepsilon + c_2.
$$
This yields \eqref{approx_dom} with $C=(d-\alpha)C_3+c_0$.
\end{proof}

To end the proof of Theorem \ref{thm1} we take $p \rightarrow \infty$ in Corollary \ref{cor_kolm} and its a priori variant for $q_\varepsilon$ using that $\frac{p}{d-\alpha}\kappa_{\frac{d-\alpha}{p}} q \rightarrow b$, $\frac{p}{d-\alpha}\kappa_{\frac{d-\alpha}{p}} q_\varepsilon \rightarrow b_\varepsilon$ as $p \uparrow \infty$. Since $u$ is assumed to be smooth, the passage to the limit presents no problem. \hfill \qed

\bigskip

\section{Proof of Theorem \ref{thm2}}

Step 1.~In order to show the boundedness of operator $T$ on $L^2$, we apply Stein's Interpolation Theorem to the following analytic family of operators defined on $C^\infty$, a dense subset of $L^2$:
$$
T_z=(\lambda+A)^{-z}b \cdot \nabla (\lambda+A)^{-1+z}, \quad 0 \leq \Real
\,z \leq 1.$$
We have
$$
\|T_{i\gamma}f\|_2 \leq \|(\lambda+A)^{-i\gamma}\|_{2 \rightarrow 2} \|b \cdot \nabla (\lambda+A)^{-1}\|_{2 \rightarrow 2}\|(\lambda+A)^{i\gamma}f\|_2.
$$
The purely imaginary powers are bounded operators on $L^2$ since their Fourier symbols are in $L^\infty$. The boundedness of the operator in the middle follows e.g.\,upon applying the pointwise estimate on the gradient of the resolvent of the fractional Laplacian and then using the usual fractional Hardy inequality on the torus:
$$
\||b|(\lambda_{d,\alpha}+A)^{-1+\frac{1}{\alpha}}\|_{2 \rightarrow 2} \leq c_{d,\alpha},
$$
so we need to assume that $\lambda \geq \lambda_{d,\alpha}>0$. (We do not need sharp constants here.) Thus, $\|T_{i\gamma}f\|_2 \leq C\|f\|_2$ for all $f \in C^\infty$, $\gamma \in \mathbb R$.

Next, we write
$$
T_{1+i\gamma}f=\nabla (\lambda+A)^{-1-i\gamma}b (\lambda+A)^{i\gamma} - (\lambda+A)^{-1-i\gamma}{\rm div\,}b\, (\lambda+A)^{i\gamma}, \quad f \in C^\infty.
$$
Once again, we put the purely imaginary powers outside and then apply the usual fractional Hardy inequality to obtain
$\|T_{1+i\gamma}f\|_2 \leq C\|f\|_2$ for all $f \in C^\infty$, $\gamma \in \mathbb R$.

Stein's Interpolation Theorem yields ($z=\frac{1}{2}$)
$$
\|(\lambda+A)^{-\frac{1}{2}}b \cdot \nabla (\lambda+A)^{-\frac{1}{2}}f \| \leq C\|f\|_2,
$$
and so operator $T$ is bounded. We denote its extension by continuity to $L^2$ also by $T=T(b)$.
Let us note that the norm of this operator will not enter the sought a posteriori Hardy inequality: we only need its boundedness to make sense of the left-hand side of the inequality.

\begin{remark}
\label{dual_rem}
 The same argument yields that
$$
T^\ast(b) f:=-(\lambda+A)^{-\frac{1}{2}}\nabla \cdot b (\lambda+A)^{-\frac{1}{2}}, \quad f \in C^\infty,
$$
is bounded and admits extension by continuity to $L^2$, which we denote also by $T^\ast(b)$; we will use this in the proof of Corollary \ref{cor2}.
\end{remark}

Step 2.~Let $u$ be as in the statement of the theorem, i.e.\,$u \in L^\infty$, $Au \in L^2$. Integrating by parts, we obtain that $(\lambda+A)^{\frac{1}{2}}u \in L^2$. We can find $u_\varepsilon \in C^\infty$, $\sup_{\varepsilon>0}\|u_\varepsilon\|_\infty<\infty$ such that
$$u_\varepsilon \rightarrow u, \quad A^{\frac{1}{2}}u_\varepsilon \rightarrow A^{\frac{1}{2}}u, \quad (\lambda+A)^{\frac{1}{2}}u_\varepsilon \rightarrow (\lambda+A)^{\frac{1}{2}}u, \quad \text{ in } L^2$$ as $\varepsilon \downarrow 0$, e.g.\,take $u_\varepsilon:=e^{-\varepsilon A}u$.

In the next calculation we use estimate
$$
\langle |A^{\frac{1}{2}}e^{u_\varepsilon}|^2\rangle \leq \|e^{u_\varepsilon}\|^2_\infty \langle |A^{\frac{1}{2}}u_\varepsilon|^2\rangle,
$$
which is an immediate consequence of the quadratic form representation \eqref{torus_frac_form} and of the Mean-value Theorem.
We have
\begin{align*}
\langle (\lambda+A)^{\frac{1}{2}}e^{u_\varepsilon},(\lambda+A)^{\frac{1}{2}}e^{u_\varepsilon}\rangle & = \lambda \langle e^{2u_\varepsilon}\rangle + \langle A^{\frac{1}{2}}e^{u_\varepsilon},A^{\frac{1}{2}}e^{u_\varepsilon}\rangle \\
& \leq \lambda \langle e^{2u_\varepsilon}\rangle + \|e^{u_\varepsilon}\|_\infty^2 \langle A^{\frac{1}{2}}u_\varepsilon,A^{\frac{1}{2}}u_\varepsilon\rangle.
\end{align*}
Hence, since $u \in L^\infty$ and $\sup_{\varepsilon>0}\|u_\varepsilon\|_\infty<\infty$, we have $\sup_{\varepsilon>0}\|(\lambda+A)^{\frac{1}{2}}e^{u_\varepsilon}\|_2<\infty$.
The latter and the fact that $e^{u_\varepsilon} \rightarrow e^u$ in $L^2$ (again, use the boundedness of $u$) yield
$$
(\lambda+A)^{\frac{1}{2}}e^{u_\varepsilon} \rightarrow (\lambda+A)^{\frac{1}{2}}e^{u} \quad \text{weakly in } L^2.
$$
Above one can take $\lambda=0$ (we need strictly positive $\lambda$ only when dealing with operator $T$). 

It follows that
\begin{align*}
\langle A^{\frac{1}{2}}u_\varepsilon,A^{\frac{1}{2}}e^{u_\varepsilon}\rangle & = \langle A^{\frac{1}{2}}(u_\varepsilon-u),A^{\frac{1}{2}}e^{u_\varepsilon}\rangle \\
& + \langle A^{\frac{1}{2}}u,A^{\frac{1}{2}}(e^{u_\varepsilon}-e^u)\rangle + \langle A^{\frac{1}{2}}u,A^{\frac{1}{2}}e^u\rangle \\
& \rightarrow \langle A^{\frac{1}{2}}u,A^{\frac{1}{2}}e^u\rangle
\end{align*}
and
\begin{align*}
\langle T(\lambda+A)^{\frac{1}{2}}u_\varepsilon,(\lambda+A)^{\frac{1}{2}}e^{u_\varepsilon}\rangle & = \langle T(\lambda+A)^{\frac{1}{2}}(u_\varepsilon-u),(\lambda+A)^{\frac{1}{2}}e^{u_\varepsilon}\rangle \\
& + \langle T(\lambda+A)^{\frac{1}{2}}u,(\lambda+A)^{\frac{1}{2}}(e^{u_\varepsilon}-e^u)\rangle + \langle T(\lambda+A)^{\frac{1}{2}}u,(\lambda+A)^{\frac{1}{2}}e^u\rangle \\
& \rightarrow \langle T(\lambda+A)^{\frac{1}{2}}u,(\lambda+A)^{\frac{1}{2}}e^u\rangle.
\end{align*}
The result now follows from the a priori Hardy inequality of Theorem \ref{thm1} upon taking $\varepsilon \downarrow 0$.
\hfill \qed

\bigskip

\section{Proof of Corollary \ref{cor1}}
Let $v=v_\varepsilon$ be the solution to 
$
\big(\lambda+\partial_t+ \Lambda_\varepsilon\big)v=0$, $v|_{t=0}=f,
$
where $\lambda$ is fixed by $2\lambda=c$.
Our aim is to prove bound
\begin{equation}
\label{v_bd}
\|v(t,\cdot)\|_{\cosh-1} \leq e^{\frac{c}{2}|\Pi^d|t}\|f\|_{\cosh -1}, \quad t>0.
\end{equation}

Step 1.~We multiply the parabolic equation by $e^v$ and integrate:
$$
\lambda \langle v,e^v \rangle + \langle \partial_t v,e^v \rangle + \langle Av,e^v\rangle + \langle b_\varepsilon \cdot \nabla v,e^v\rangle=0.
$$
 Theorem \ref{thm1} yields
$$
\lambda \langle v,e^v \rangle + \langle \partial_t v,e^v \rangle \leq c\langle e^v \rangle,
$$
and so
$
\lambda \langle v,e^v \rangle + \langle \partial_t (e^v-1)\rangle \leq c\langle e^v \rangle.
$
Therefore, integrating in time, we arrive at
\begin{equation*}
\lambda \int_0^t \langle v,e^{v}\rangle + \langle e^{v(t)}-1 \rangle \leq \langle e^{f}-1 \rangle + c \int_0^t \langle e^{v}\rangle. 
\end{equation*}
Replacing in the last inequality $v$ by $-v$ and adding up the resulting inequalities, we obtain
\begin{equation*}
\lambda  \int_0^t \langle v\sinh(v) \rangle  + \langle \cosh (v(t))-1\rangle \leq \langle \cosh (f)-1\rangle + c \int_0^t \langle \cosh(v)\rangle.
\end{equation*}
Applying  $v\sinh(v) \geq 2(\cosh(v)-1)$, we arrive at 
\begin{equation}
\label{e3}
(2\lambda-c) \int_0^t \langle \cosh(v)-1 \rangle + \langle \cosh (v(t))-1\rangle \leq \langle \cosh (f)-1\rangle + c |\Pi^d| t.
\end{equation}
The latter is a non-linear inequality in $v$. However, $v$ solves a linear equation, therefore we can ``blow up'' inequality \eqref{e3}, i.e.\,replace solution $v$ by $\frac{v}{s}$, $s>0$, with the goal to later select $s$ sufficiently small. Crucially, introducing $s$ does not affect the last term  $c |\Pi^d| t$ since $\frac{v}{s}$ is just another solution:
\begin{equation*}
(2\lambda-c) \int_0^t \langle \cosh(\frac{v}{s})-1 \rangle  + \langle \cosh (\frac{v(t)}{s})-1\rangle \leq \langle \cosh (\frac{f}{s})-1\rangle + c|\Pi^d| t.
\end{equation*}
Recalling that we have chosen $\lambda=\frac{c}{2}$, we have
\begin{equation*}
\langle \cosh (\frac{v(t)}{s})-1\rangle \leq \langle \cosh (\frac{f}{s})-1\rangle + c|\Pi^d| t.
\end{equation*}

\medskip

Step 2.~Let us fix $t$ and divide interval $[0,t]$ into $k$ subintervals: $[0,\frac{t}{k}]$, $[\frac{t}{k},\frac{2t}{k}],\dots$, \dots, $[\frac{(k-1)t}{n},t]$, where 
$k$ is chosen sufficiently large so that
$$
\gamma:=c|\Pi^d| \frac{t}{k}<1.
$$ 
Now, let $s_*>0$ be minimal such that $\langle \cosh (\frac{f}{\sqrt{1-\gamma}\,s_*})-1\rangle=1$ (i.e.\,$\|f\|_{\cosh-1}=\sqrt{1-\gamma}\,s_*$).
 Using the Taylor series expansion for $\cosh-1$, one sees that
$$
\cosh (\frac{f}{\sqrt{1-\gamma}\,s_*})-1 \geq \frac{1}{1-\gamma}\biggl[\cosh (\frac{f}{s_*}))-1\biggr].
$$
So, $\langle \cosh (\frac{f}{s_*}))-1 \rangle \leq 1-\gamma$.
Therefore, 
$$
\langle \cosh (\frac{v(\frac{t}{k})}{s_*})-1\rangle \leq 1,
$$
and so $$\|v(\frac{t}{k})\|_{\cosh-1} \leq c_* \equiv \frac{1}{(1-\gamma)^{\scriptscriptstyle \frac{1}{2}}}\|f\|_{\cosh-1} \equiv \frac{1}{\big(1-c|\Pi^d|\frac{t}{k}\big)^{\scriptscriptstyle \frac{1}{2}}}\|f\|_{\cosh-1}.$$
By the semigroup property, 
$$\|v(t)\|_{\cosh-1} \leq  \bigg(1-c|\Pi^d|\frac{t}{k}\bigg)^{-\frac{k}{2}}\|f\|_{\cosh-1}.$$
Taking $k \rightarrow \infty$, we obtain contractivity bound \eqref{v_bd}.
\hfill \qed

\bigskip

\section{Proof of Corollary \ref{cor2}} Proof of existence. Step 1. The argument in Step 1 of the proof of Theorem \ref{thm2} yields
\begin{equation}
\label{unif_T}
\|T^\ast(b)\|_{2 \rightarrow 2}, \quad \sup_{\varepsilon>0}\|T^\ast(b_\varepsilon)\|_{2 \rightarrow 2} \leq C.
\end{equation}
see Remark \ref{dual_rem} (to prove a priori estimates for $b_\varepsilon$ we invoke \eqref{approx_dom}, \eqref{approx_dom2}). Fix some $g \in C^\infty$. For every $\varphi \in C^\infty$,
\begin{align*}
\langle \big(T^\ast(b)-T^\ast(b_\varepsilon)\big)g,\varphi \rangle = \langle (b-b_\varepsilon)(\lambda+A)^{-\frac{1}{2}}g,\nabla (\lambda+A)^{-\frac{1}{2}}\varphi\rangle \rightarrow 0, \quad \varepsilon \downarrow 0.
\end{align*}
Taking into account uniform bounds \eqref{unif_T}, we obtain that $$T^\ast(b_\varepsilon)g \rightarrow T^\ast(b)g \quad \text{ weakly in $L^2$.}
$$
 Fix some $\varepsilon_n \downarrow 0$ and set $b_n:=b_{\varepsilon_n}$. Using the fact that $b_n \mapsto T^\ast(b_n)g$ is linear, we can now apply Mazur's Lemma, which ensures that there exists a sequence of finite convex combinations $\tilde{b}_n=\sum_{k \geq n} c_{k,n}b_k$ such that
 $$
 T^\ast(\tilde{b}_n)g \rightarrow T^\ast(b)g \quad \text{ in }L^2.
 $$
The choice of $\{\tilde{b}_n\}$ depends on $g$.
Using a diagonal argument, we can refine $\{\tilde{b}_n\}$ further, to a sequence $\{\hat{b}_n \}$ of convex combinations $\{b_n\}$ to have the previous convergence hold for all $g$ in a fixed countable subset of $C^\infty$ that is dense in $L^2$. In view of \eqref{unif_T}, this implies that 
\begin{equation}
\label{strong_conv}
 T^\ast(\hat{b}_n) \rightarrow T^\ast(b) \quad \text{ strongly in } L^2.
\end{equation}

Step 2.
The solution $u_n$ to the approximating equation $$
(\lambda + A + \hat{b}_n \cdot \nabla)u_n =f_n, 
$$
where $f_n$ is a mollification of $f$, say, $f_n:=e^{-n^{-1}(-\Delta)^{\frac{\alpha_1}{2}}}f$,
satisfies
\begin{equation}
\label{weak_sol_approx}
\lambda \langle u_n,\varphi \rangle + \langle A^{\frac{1}{2}}u_n,A^{\frac{1}{2}}\varphi\rangle + \langle T(\hat{b}_n)(\lambda+A)^{\frac{1}{2}}u_n,(\lambda+A)^{\frac{1}{2}}\varphi\rangle=\langle f_n,\varphi\rangle
\end{equation}
for all $\varphi \in L^2$ such that $A^{\frac{1}{2}}\varphi \in L^2$. By the a priori estimates \eqref{disp1}, \eqref{disp2}, i.e.
\begin{equation*}
\|u_n\|_\infty \leq \frac{1}{\lambda}\|f\|_\infty.
\end{equation*}
\begin{equation*}
\langle |A^{\frac{1}{2}}u_n|^2\rangle \leq \frac{C}{\lambda^2}\|f\|_\infty^2,
\end{equation*}
we can find a subsequence of $\{u_n\}$ (without loss of generality still denoted by $\{u_n\}$) such that
$$
u_n \rightarrow u \text{ weakly in } L^2 \text{ for some } u, 
$$
$$
A^{\frac{1}{2}}u_n \rightarrow A^{\frac{1}{2}}u \quad \text{weakly in }L^2.
$$
Using \eqref{strong_conv}, we can pass to the limit $n \rightarrow \infty$ in \eqref{weak_sol_approx} to obtain, 
$$
\lambda \langle u,\varphi \rangle + \langle A^{\frac{1}{2}}u,A^{\frac{1}{2}}\varphi\rangle + \langle T(b)(\lambda+A)^{\frac{1}{2}}u,(\lambda+A)^{\frac{1}{2}}\varphi\rangle=\langle f,\varphi\rangle.
$$
Since $u_n$ are uniformly bounded, we have $u \in L^\infty$. Thus, $u$ is a weak solution.

\medskip

Proof of uniqueness. Suppose that there exist two weak solutions: $u_1$ and $u_2$. Then $v=u_1-u_2$ satisfies
\begin{equation}
\label{uniq_id}
\lambda \langle v,\varphi \rangle + \langle A^{\frac{1}{2}}v,A^{\frac{1}{2}}\varphi\rangle + \langle T(\lambda+A)^{\frac{1}{2}}v,(\lambda+A)^{\frac{1}{2}}\varphi\rangle=0.
\end{equation}
We can take $\varphi:=e^{v}$. Indeed, since $v \in L^\infty$, $A^{\frac{1}{2}}v \in L^2$, by the calculation in the proof of Theorem \ref{thm2}, we have $A^{\frac{1}{2}}e^v \in L^2$, so we have an admissible test function:
$$
\lambda \langle v,e^v\rangle + \langle A^{\frac{1}{2}}v,A^{\frac{1}{2}}e^v\rangle + \langle T(\lambda+A)^{\frac{1}{2}}v,(\lambda+A)^{\frac{1}{2}}e^v\rangle=0.
$$
Now, we apply  Theorem \ref{thm2}, i.e.\,Hardy's inequality in the a posteriori form:
$$
\lambda\langle v,e^v\rangle \leq c \langle e^v\rangle.
$$
Replacing $v$ by $-v$ and adding up the resulting inequalities and dividing by $2$ yields
$$
\lambda\langle v \sinh v\rangle \leq c \langle \cosh v\rangle.
$$
We now apply $v \sinh v \geq \cosh v-1$:
$$
(\lambda-c)\langle (\cosh v -1)\rangle \leq c |\Pi^d|.
$$
Note that identity \eqref{uniq_id} also holds for $\frac{v}{s}$ for any $s>0$, and therefore so does the last inequality:
$$
(\lambda-c)\langle (\cosh \frac{v}{s} -1)\rangle \leq c |\Pi^d|.
$$
Selecting $s$ sufficiently small, we can make the left-hand side arbitrarily large thus arriving to a contradiction, unless  $v \equiv 0$. \hfill \qed

\end{document}